\newcommand{\RR}{{\mathbb R}}
\newcommand{\CC}{{\mathbb C}}
\def\bege{\begin{equation}} \def\ende{\end{equation}}
\def\begr{\begin{eqnarray}} \def\endr{\end{eqnarray}}
\def\BB{ \mathbb{B}}
\def\SS{ \mathbb{S}}
\def\CC{ \mathbb{C}}
\newcommand{\DD}{{\mathbb D}}
\def\B{\mathcal{B}}
\def\R{\mathcal{R}}
\def\I{\mathcal{I}}
\def\D{\mathbb{D}}
\def\N{\mathbb N}
\def\hD{\hat{\mathcal{D}}}
\def\dD{\mathcal{D}}
\def\a{\alpha}
\def\vp{\varphi}
\def\om{\omega}
\def\begr{\begin{eqnarray}} \def\endr{\end{eqnarray}}
\def\msk{\medskip}
\def\ol{\overline}
\newtheorem{Lemma}{Lemma}
\newtheorem{Theorem}{Theorem}
\newtheorem{Remark}{Remark}
\begin{document}
\title[  ]{  Bergman projections induced by   doubling weights  on the unit ball of $\CC^n$}
  \author{ Juntao Du,    Songxiao Li$\dagger$, Xiaosong Liu and Yecheng Shi  }
  \address{Juntao Du\\ Faculty of Information Technology, Macau University of Science and Technology, Avenida Wai Long, Taipa, Macau.}
 \email{jtdu007@163.com  }
 \address{Songxiao Li\\ Institute of Fundamental and Frontier Sciences, University of Electronic Science and Technology of China,
  610054, Chengdu, Sichuan, P.R. China. } \email{jyulsx@163.com}

 \address{Xiaosong Liu \\ S Department of Mathematics, Shantou University, Shantou 515063,  P. R. China }\email{gdxsliu@163.com}

 \address{Yecheng Shi\\    School of Mathematics and Statistics, Lingnan Normal University,
      Zhanjiang 524048, Guangdong, P. R. China}\email{ 09ycshi@sina.cn}

 \subjclass[2000]{30H10, 47B33 }
 \begin{abstract} The boundedness of $P_\om:L^\infty(\BB)\to \B(\BB)$ and $P_\om(P_\om^+):L^p(\BB,\upsilon dV)\to L^p(\BB,\upsilon dV)$   on the unit ball of $\CC^n$ with $p>1$ and $\om,\upsilon\in\dD$ are investigated in this paper.
  \thanks{$\dagger$ Corresponding author.}
 \vskip 3mm \noindent{\it Keywords}: Weighted Bergman space,  Bergman projection, doubling weight.
 \end{abstract}
 \maketitle

\section{Introduction}

Let $\BB$ be the open unit ball of $\CC^n$ and $\SS$   the boundary of $\BB$. When $n=1$,  $\BB$ is  the open unit disk in the complex plane $\mathbb{C}$ and always denoted by $\D$. Let $H(\BB)$ denote the space of all holomorphic functions on $\BB$. For any two points
$$z=(z_1,z_2,\cdots,z_n)\,\mbox{ and } \, w=(w_1,w_2,\cdots,w_n)$$
in $\CC^n$,  define
$\langle z,w \rangle=z_1\overline{w_1}+\cdots+z_n\overline{w_n}$ and
$$|z|=\sqrt{\langle z,z \rangle}=\sqrt{|z_1|^2+\cdots+|z_n|^2}.$$

Suppose $\om$ is a radial weight ( i.e., $\om$ is a positive, measurable and  integrable  function on $[0,1)$ and $\om(z)=\om(|z|)$ for all $z\in\BB$).  Let $\hat{\om}(r)=\int_r^1\om(t)dt$.
We say that
\begin{itemize}
  \item $\om$ is a doubling weight, denoted by $\om\in \hD$,  if there is a constant $C>0$ such that
$$\hat{\om}(r)<C\hat{\om}(\frac{1+r}{2}) ,\,\,\mbox{ when } 0\leq r<1;$$
  \item  $\om$ is a regular weight,  denoted by  $\om\in    \R $, if there exist $C>0$ and $\delta\in(0,1)$ such that
$$\frac{1}{C}<\frac{\hat{\om}(t)}{(1-t)\om(t)}<C,\,\mbox{ when }\, t\in(\delta,1);$$
 \item  $\om$ is a rapidly increasing weight, denoted by  $\om\in\I$, if (see \cite{PjaRj2014book})
$$\lim_{r\to 1} \frac{\hat{\om}(r)}{(1-r)\om(r)}=\infty;$$
  \item $\om$ is a reverse doubling weight, denoted by $\om\in\check{\dD}$, if
 there exist $K>1$ and $C>1$, such that
\begin{align}\label{0420-1}
\hat{\om}(t)\geq C\hat{\om}(1-\frac{1-t}{K}),\,\,\,\,t\in(0,1).
\end{align}
\end{itemize}
   See  \cite{Pja2015,PjaRj2014book} and the references therein for more details about $\I,\R$,  $\hD$.
Let $\dD=\hD\cap\check{\dD}$. More information about $\check{\dD}$ and $\dD$ can be found in \cite{PjRj2019arxiv,KtPjRj2018arxiv}.

 Let $d\sigma$  and $dV$ be the normalized surface and volume measures on $\SS$ and $\BB$,  respectively.
For $0<p< \infty$, the  Hardy space $H^p(\BB) $(or $H^p$) is the space consisting of all functions $f\in H(\BB)$ such that
$$\|f\|_{H^p}:=\sup_{0<r<1}M_p(r,f)<\infty,$$
  where
$$M_p(r,f):=\left(\int_{\SS}|f(r\xi)|^pd\sigma(\xi)\right)^\frac{1}{p}, \,\,\,\,0<p<\infty.$$
 $H^\infty$ is the space consisting of all $f\in H(\BB)$ such that $$\|f\|_{H^\infty}:=\sup_{z\in\BB}|f(z)|<\infty.$$

For any $f\in H(\BB)$, let $\Re f$ be the radial derivative of $f$,  that is,
$$\Re f(z)=\sum_{k=1}^n z_k\frac{\partial f}{\partial z_k}(z),\,\,\,\,z=(z_1,z_2,\cdots,z_n)\in\BB.$$
Then Bloch space $\B(\BB)$ consist of all $f\in H(\BB)$ such that
$$ \|f\|_{\B(\BB)}:=|f(0)|+\sup_{z\in\BB}(1-|z|^2)|\Re f(z)|<\infty.$$
When $n=1$, $\|\cdot\|_{\B(\D)}$ is a little difference from the norm defined in   classical way, see \cite{zhu} for example, but they are equivalent.
We also keep $\B$ as the abbreviation of  $\B(\BB)$.

Suppose $\mu$ is  a positive Borel measure on $\BB$ and $0<p<\infty$. The Lebesgue space $L^p(\BB,d\mu)$ consists of all measurable
complex functions $f$ on $\BB$ such that $|f|^p$ is integrable with respect to $\mu$, that is, $f\in L^p(\BB,d\mu)$ if and only if
$$\|f\|_{L^p(\BB,d\mu)}:=\left(\int_\BB |f(z)|^pd\mu(z)\right)^\frac{1}{p}<\infty.$$
$L^\infty(\BB,d\mu)$  consists of all measurable complex functions $f$ on $\BB$ such that $f$ is essential bounded, that is, $f\in L^\infty(\BB,d\mu)$ if and only if
$$\|f\|_{L^\infty(\BB, d\mu)}:=\inf_{E\subset\BB,\mu(E)=0}\sup_{z\in\BB\backslash E}|f(z)|<\infty.$$
More details about $L^p(\BB,d\mu)$ can be seen in \cite{Rw1980,Zk2005}.
For a positive and measurable function $\om$ on $\BB$, if  $\om\in L^1(\BB,dV)$,  letting $d\mu(z)=\om(z)dV(z)$,  $\mu$ is a Borel measure on $\BB$.
 Then, we will write $L^p(\BB,d\mu)$ as $L^p(\BB,\om dV)$.
When $n=1$ and $z\in\D$, let $dV(z)=\frac{1}{\pi}dA(z)$ be the normalized area measure on $\D$. Then we can define
the Lebesgue space on the unit disk in the same way.

In \cite{PjaRj2014book}, J. Pel\'aez and J. R\"atty\"a introduced a new class of  weighted Bergman spaces, denoted by  $A_\om^p(\D)$, which are induced by  a rapidly increasing weight $\om$ in $\D$. That is
$$A_\om^p(\DD)=L^p(\D,\om dA)\cap H(\D),\,\mbox{where }\,0<p<\infty.$$
See \cite{Pja2015,PjaRj2014book,PjaRj2015,PjaRj2016,PjRj2016jmpa,PjaRjSk2015mz,PjaRjSk2018jga} for more results on $A_\om^p(\D)$ with $\om\in\hD$.  In \cite{DjLsLxSy2019arxiv}, we extended the Bergman space $A_\om^p(\D)$  to the unit ball $\BB$ of $\CC^n$.   That is,
$$A_\om^p(\BB)=L^p(\BB,\om dA)\cap H(\BB),\,\mbox{where }\,0<p<\infty.$$
For brief, let $A_\om^p=A_\om^p(\BB)$.
As a subspace of $L^p(\BB,\om dV)$, the norm on $A_\om^p$ will be written as $\|\cdot\|_{A_\om^p}$.
 It is easy to check that $A_\om^p$ is a Banach space when $p\geq 1$ and a complete metric space with the distance $\rho(f,g)=\|f-g\|_{A_\om^p}^p$ when $0<p<1$.
When $\alpha>-1$  and $c_\a=\frac{\Gamma(n+\a+1)}{\Gamma(n+1)\Gamma(\a+1)}$, if $\om(z)=c_\alpha(1-|z|^2)^\alpha$, the space $A_\om^p$ becomes the classical weighted Bergman space $A_\alpha^p$, and we write $dV_\alpha(z)=c_\alpha(1-|z|^2)^\alpha dV(z)$.
 When $\alpha=0$, $A^p_0=A^p$ is the standard Bergman space.
See \cite{Rw1980,Zk2005} for the theory of $H^p$ and $A_\alpha^p$.

When $p=2$, the space $A_\om^2$ is a Hilbert space with the inner product
$$\langle f,g \rangle_{A_\om^2}=\int_\BB f(z)\ol{g(z)}\om(z)dV(z), \,\,\mbox{ for all }\,\,f,g\in A_\om^2.$$
In a standard way, for every $z\in\BB$, the point evaluation $L_z f=f(z)$  is a bounded linear functional on  $A_\om^2$.
Then, the  Riesz representation theory  shows that, there exists a unique function $B_z^\om$
such that
\begin{align*}
f(z)=\langle f, B_z^\om \rangle_{A_\om^2}=\int_\BB f(w)\ol{B_z^\om(w)}\om(w)dV(w),\,\,\mbox{ for all }\,\,f\in A_\om^2.
\end{align*}
So, for any $f\in L^1(\BB,\om dV)$,   the Bergman projection $P_\om f$ is defined by
\begin{align*}
P_\om f(z)=\int_\BB f(\xi)\ol{B_z^\om(\xi)}\om(\xi)dV(\xi),
\end{align*}
while the  maximal Bergman projection $P_\om^+$ is defined by
$$P_\om^+(f)(z)=\int_\BB f(\xi)\left|B_z^\om(\xi)\right|\om(\xi)dV(\xi).   $$
When $\om(z)=c_\alpha(1-|z|^2)^\alpha(\alpha>-1)$,  $P_\om$ and $P_\om^+$ are written as $P_\alpha$ and $P_\alpha^+$, respectively.

The study about the Bergman projection has a long history and important sense.
If $s\in\CC$ such that  $\mathrm{Re}s>-1$, for all  $f\in L^1(\BB,dV)$, let
\begin{align*}
T_s f (z)=\frac{\Gamma(n+s+1)}{\Gamma(n+1)\Gamma(s+1)}\int_{\BB}\frac{(1-|w|^2)^s}{(1-\langle z,w\rangle)^{n+1+s}}f(w)dV(w).
\end{align*}
Obviously, when $s=0$, we have $T_0=P_0$.
In \cite{FfRw1974iumj}, Forelli and  Rudin proved that if $1\leq p<\infty $, $T_s$ is bounded on $L^p(\BB,dV)$ if and only if  $(1+\mathrm{Re}s)p>1$.
In \cite{Cbr1990pams},   Choe proved that, if $p\geq 1$ and $\alpha>-1$,  $T_s$ is bounded on $L^p(\BB, dV_\alpha)$ if and only if
$(1+\mathrm{Re}s)p>1+\alpha.$
In \cite{Lc2015jfa}, Liu gave a sharp estimate about the norm of $P_0$ on $L^p(\BB,dV)$.
  From \cite[Theorem 2.11]{Zk2005}, we see that $P_\alpha$ is  bounded on $L^p(\BB,dV_\beta)$  if and only if
$p(\alpha+1)>\beta+1$ when $p\geq 1, \alpha,\beta\in(-1,\infty)$.

In the setting of the unit disk, Bekoll\'e and  Bonami   showed that, if $1<p<\infty$, $\upsilon$ is positive on $\D$ and $\int_\D \upsilon(z)dA_\alpha(z)<\infty$,
$P_\alpha: L^p(\D, \upsilon dA_\alpha)\to L^p(\D, \upsilon dA_\alpha)$ is bounded if and only if $\upsilon$ satisfies the Bekoll\'e-Bonami condition, see\cite{BdBa1978,Bd1981}.
  The result was extended in \cite{PjRjWbarxiv2016} for some $\om\in\R$.
In \cite{PjRj2016jmpa},  for $\om\in\R$, the Bergman projections $P_\om$ and the  maximal Bergman projection  $P_\om^+$  on some analytic function spaces on   $\D$ were studied.
  In \cite{PjRj2019arxiv}, Pel\'aez and R\"atty\"a studied the Bergman projection induced by radial weight on some analytic function spaces on $\D$.   They showed that
\begin{itemize}
  \item $P_\om:L^\infty(\D,dA)\to \B(\D)$ is  bounded if and only if $\om\in\hD$;
  \item $P_\om:L^\infty(\D,dA)\to \B(\D)$ is  bounded and onto if and only if $\om\in\dD$;
  \item If $\om\in \hD$ and $p>1$, $P_\om^+:L^p(\D,\om dA)\to L^p(\D,\om dA)$ is bounded if and only if $\om\in\dD$;
  \item If $1<p<\infty$, $\om\in\hD$, $\upsilon$ is a radial weight, then the boundedness of $P_\om^+:L^p(\D,\upsilon dA)\to L^p(\D,\upsilon dA)$ implies $\om,\upsilon\in \dD$.
\end{itemize}

Motivated by \cite{PjRj2016jmpa, PjRj2019arxiv}, in this paper we investigate the boundedness of $P_\om:L^\infty(\BB,dV)\to \B(\BB)$ and $P_\om(P_\om^+):L^p(\BB,\upsilon dV)\to L^p(\BB,\upsilon dV)$ on the unit ball of $\CC^n$ with $p>1$ and $\om,\upsilon\in\dD$.

This paper is organized as follows.  In Section 2, we recall some  results and notations.
In Section 3, we give some estimates for $B_z^\om$ with $\om\in\hD$.
In Section 4, we investigate the boundedness of  $P_\om$ and $P_\om^+$ with $\om\in\dD$.

Throughout this paper,  the letter $C$ will denote a constant  and may differ from one occurrence to the other.
The notation $A \lesssim B$ means that there is a positive constant $C$ such that $A\leq CB$.
The notation $A \approx B$ means $A\lesssim B$ and $B\lesssim A$. \msk

 \section{ Preliminary results}

 For any $\xi,\tau\in\overline{\BB}$, let $d(\xi,\tau)=|1-\langle \xi,\tau\rangle|^\frac{1}{2}$.
 Then $d(\cdot,\cdot)$ is a nonisotropic metric.
For $r>0$   and $\xi\in\SS$, let
  $$Q(\xi,r)=\{\eta\in \SS:|1-\langle\xi,\eta\rangle|\leq r^2\}.$$
 $Q(\xi,r)$  is a ball in $\SS$ for all $\xi\in \SS$ and $r\in(0,1)$.
More information about $d(\cdot,\cdot)$ and $Q(\xi,r)$ can be found in \cite{Rw1980,Zk2005}.

For any $a\in\BB\backslash\{0\}$, let
$Q_a=Q({a}/{|a|},\sqrt{1-|a|})$
and
$$S_a=S(Q_{a})=\left\{z\in\BB:\frac{z}{|z|}\in Q_{a},|a|<|z|<1\right\}.$$
For convince, if $a=0$, let $Q_a=\SS$ and $S_a=\BB$.  We call $S_a$ the Carleson block.
 See \cite{DjLsLxSy2019arxiv} for more information about the Carleson block.
As usual, for a measurable set $E\subset\BB$, $\om(E)=\int_E \om(z)dV(z)$.\msk

\begin{Lemma}\label{0507-1}
Suppose $\om$ is a radial weight. Then
\begin{enumerate}[(i)]
  \item The following statements are equivalent.
  \begin{enumerate}[(a)]
    \item  $\om\in\hD$;
    \item there is a constant $b>0$ such that $\frac{\hat{\om}(t)}{(1-t)^b}$ is essentially increasing;
    \item for all $x\geq 1$, $\int_0^1 s^x\om(s)ds\approx \hat{\om}(1-\frac{1}{x})$.
  \end{enumerate}
  \item $\om\in\check{\dD}$ if and only if there is a constant $a>0$ such that $\frac{\hat{\om}(t)}{(1-t)^a}$ is essentially decreasing.
  \item If $\om$ is continuous, then $\om\in \R$ if and only if  there are  $-1<a<b<+\infty$ and $\delta\in [0,1)$, such that
  \begin{align}\label{0515-1}\frac{{\om}(t)}{(1-t)^b} \nearrow\infty,
\,\,\mbox{ and }\,\,\frac{{\om}(t)}{(1-t)^a}\searrow 0,\,\,\mbox{ when }\,\,\delta\leq t<1 .
\end{align}
\end{enumerate}
\end{Lemma}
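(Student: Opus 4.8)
The whole lemma is a statement about the radial profile of $\om$, so the plan is to work entirely with the one-variable function $\hat{\om}$ on $[0,1)$; the dimension $n$ never enters. For part (i) I would run the cycle (a)$\Rightarrow$(b)$\Rightarrow$(c)$\Rightarrow$(a). The equivalence (a)$\Leftrightarrow$(b) is the engine of the whole lemma and comes from iterating the defining inequality: reading the doubling condition as $\hat{\om}(r)\le C\,\hat{\om}(r')$ with $1-r'=(1-r)/2$, an $N$-fold iteration gives $\hat{\om}(r)\le C^{N}\hat{\om}(r_N)$ with $1-r_N=(1-r)/2^{N}$. Given $r\le t$ I would choose $N$ with $2^{N}\approx(1-r)/(1-t)$, so that $r_N\ge t$ and hence $\hat{\om}(r_N)\le\hat{\om}(t)$; since $C^{N}\approx\bigl((1-r)/(1-t)\bigr)^{\log_2 C}$, putting $b=\log_2 C$ shows that $\hat{\om}(t)/(1-t)^{b}$ is essentially increasing. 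The reverse implication (b)$\Rightarrow$(a) is immediate: applying the essential monotonicity between $r$ and $(1+r)/2$ returns the doubling inequality up to the harmless factor $2^{b}$.

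For the moment characterization (c) I would first integrate by parts, using $\om(s)=-\hat{\om}'(s)$ and $\hat{\om}(1)=0$, to obtain $\int_0^1 s^{x}\om(s)\,ds=x\int_0^1 s^{x-1}\hat{\om}(s)\,ds$. The bound $\gtrsim\hat{\om}(1-1/x)$ is automatic (restrict to $[1-1/x,1]$ and use $(1-1/x)^{x}\gtrsim 1$ for $x\ge1$, the range $1\le x\le 2$ being handled by continuity). For the reverse bound I would split the integral at $1-1/x$, dominate $\hat{\om}$ by $\hat{\om}(1-1/x)$ on the outer piece, and on the inner piece feed in (b) to replace $\hat{\om}(s)$ by a constant times $x^{b}(1-s)^{b}\hat{\om}(1-1/x)$; what remains is the Beta integral $x^{b+1}\int_0^1 s^{x-1}(1-s)^{b}\,ds=x^{b+1}\frac{\Gamma(x)\Gamma(b+1)}{\Gamma(x+b+1)}\approx\Gamma(b+1)$, which is bounded in $x$. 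Finally (c)$\Rightarrow$(a): estimating $\int_0^1 s^{2x}\om$ from below over $[1-1/x,1]$ gives $\hat{\om}(1-1/(2x))\gtrsim\int_0^1 s^{2x}\om\gtrsim\hat{\om}(1-1/x)$, and writing $r=1-1/x$ this is precisely $\hat{\om}(r)\lesssim\hat{\om}((1+r)/2)$. Part (ii) is then the same iteration applied to the reverse-doubling inequality $\hat{\om}(t)\ge C\,\hat{\om}(1-(1-t)/K)$: it forces geometric decay of $\hat{\om}$ toward the boundary and so makes $\hat{\om}(t)/(1-t)^{a}$ essentially decreasing with $a=\log_K C>0$, while the converse is read off the definition after choosing $K$ so large that $K^{a}$ beats the monotonicity constant.

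For part (iii) the usable content of $\om\in\R$ is the comparison $\hat{\om}(t)\approx(1-t)\om(t)$ on $(\delta,1)$. The "if" direction I would prove directly: from $\om(s)/(1-s)^{a}$ decreasing and $\om(s)/(1-s)^{b}$ increasing one gets, for $s\ge t$, the two-sided bound $\frac{\om(t)}{(1-t)^{b}}(1-s)^{b}\lesssim\om(s)\lesssim\frac{\om(t)}{(1-t)^{a}}(1-s)^{a}$; integrating over $s\in[t,1]$ and using $\int_t^1(1-s)^{c}\,ds=(1-t)^{c+1}/(c+1)$ (finite since $a,b>-1$) sandwiches $\hat{\om}(t)$ between $\frac{(1-t)\om(t)}{b+1}$ and $\frac{(1-t)\om(t)}{a+1}$, which is exactly $\om\in\R$. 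For "only if" I would transport monotonicity through $\hat{\om}\approx(1-t)\om$: since $\R\subset\hD\cap\check{\dD}$, parts (i) and (ii) furnish $0<a_0\le b_0$ with $\hat{\om}(t)/(1-t)^{b_0}$ essentially increasing and $\hat{\om}(t)/(1-t)^{a_0}$ essentially decreasing; then $\om(t)/(1-t)^{b-1}\approx\hat{\om}(t)/(1-t)^{b}$ inherits the same behavior, and enlarging the upper exponent and shrinking the lower one forces the limits $\nearrow\infty$ and $\searrow0$ while keeping both exponents in $(-1,\infty)$.

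The principal obstacle is conceptual and lives in part (iii): the arrows $\nearrow\infty$, $\searrow0$ cannot be taken as genuine monotonicity, because a regular weight may oscillate — for instance $\om(t)=(1-t)^{c}\bigl(2+\sin\tfrac{1}{1-t}\bigr)$ satisfies the ratio condition yet $\om(t)/(1-t)^{a}$ is nowhere eventually monotone — so the statement must be read with \emph{essentially} monotone, and the argument above is carried out entirely at that level. The one genuinely delicate computation is the upper bound in (c): it is exactly the doubling hypothesis, entering through (b), that tames the contribution of $\hat{\om}$ near the origin, and without it the Beta-integral estimate collapses. Beyond this I expect only routine bookkeeping of the constants $C^{N}$ and a check that the adjusted exponents in (iii) deliver the correct boundary limits.
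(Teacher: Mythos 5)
You should first note that the paper does not prove this lemma at all: it is quoted from the literature (Lemmas B and C of \cite{KtPjRj2018arxiv} and Lemma 1.1 of \cite{PjaRj2014book}), so your self-contained argument is necessarily a different route, and it is essentially the standard one from those sources. The iteration of the doubling (resp.\ reverse-doubling) inequality along the scale $1-r_N=(1-r)/2^N$ (resp.\ $(1-r)/K^N$) to obtain power-type essential monotonicity of $\hat{\om}(t)/(1-t)^{b}$, the integration by parts $\int_0^1 s^x\om(s)\,ds=x\int_0^1 s^{x-1}\hat{\om}(s)\,ds$ followed by the Beta-integral estimate for the moment condition (c), the recovery of doubling from (c) by comparing the moments of orders $x$ and $2x$, and the two-sided integration of the monotonicity inequalities in the ``if'' part of (iii) all check out. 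Two small points to tidy: in (c)$\Rightarrow$(a) the elementary bound $\int_{1-1/x}^1 s^{2x}\om(s)\,ds\ge(1-1/x)^{2x}\hat{\om}(1-1/x)$ degenerates as $x\to1^{+}$, so the range $r\le\frac12$ of the doubling inequality must be disposed of separately (trivially, since $\hat{\om}(r)/\hat{\om}(\frac{1+r}{2})\le\hat{\om}(0)/\hat{\om}(3/4)<\infty$ there); and in the ``only if'' of (iii) you should record that the exponent shifts keep $a>-1$, which is precisely where the strict positivity of the exponent coming from part (ii) is needed.

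Your remark on (iii) is not a defect of your proof but a genuine observation about the statement: with the arrows read as honest monotonicity the ``only if'' direction is false, and your example $\om(t)=2+\sin\frac{1}{1-t}$ is a correct counterexample, since the sign of $\frac{d}{dt}\bigl(\om(t)(1-t)^{-b}\bigr)$ is that of $b\bigl(2+\sin u\bigr)+u\cos u$ with $u=1/(1-t)$, which changes for arbitrarily large $u$ whatever $b$ is. The statement must therefore be read with \emph{essential} monotonicity, which is in fact all the paper ever uses: in the proof of Lemma \ref{1210-3}(iv) only the two-sided bounds $\bigl(\frac{1-x}{1-y}\bigr)^a\lesssim\frac{\om^*(x)}{\om^*(y)}\lesssim\bigl(\frac{1-x}{1-y}\bigr)^b$ are extracted. (For the particular weights $(1-t)^\alpha\om^*(t)$ to which the paper applies (iii), genuine monotonicity can actually be arranged, because $\om^*$ is $C^1$ with $(1-t)\,|(\om^*)'(t)|\approx\om^*(t)$, so for $b$ large and $a$ small the derivative of $\om^*(t)/(1-t)^{b}$, resp.\ $\om^*(t)/(1-t)^{a}$, has a fixed sign near $1$; so the later applications are safe either way.)
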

Lemma \ref{0507-1} plays an important role in this research and  can be found in many papers.
Here,  we refer to Lemmas B and C in \cite{KtPjRj2018arxiv} and observation $(v)$ of Lemma 1.1 in \cite{PjaRj2014book}.

For any radial weight $\om$,  its associated weight $\om^*$  is defined by
\begin{align*}\om^*(z)=\int_{|z|}^1 \om(s)\log\frac{s}{|z|}sds, \,\,z\in\D\backslash\{0\}.\end{align*}
The following lemma gives some properties and applications of $\om^*$.
\begin{Lemma}\label{1210-3}
Assume that $\om\in\hD$. Then  the following statements hold.
\begin{enumerate}[(i)]
  \item  $\om^*(r)\approx (1-r)\int_r^1 \om(t)dt$ when $r\in(\frac{1}{2},1)$.
    \item For any $\alpha>-2$, $(1-t)^\alpha\om^*(t)\in\R$.
\item  $\om(S_a)\approx (1-|a|)^n\int_{|a|}^1 \om(r)dr$.
\item $\hat{\om}(z)\approx\hat{\om}(a)$, if   $\frac{1}{C}<\frac{1-|z|}{1-|a|}<C$ for some fixed $C>1$.
\end{enumerate}
\end{Lemma}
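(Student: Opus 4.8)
The plan is to dispatch the four parts in the order (i), (iv), (ii), (iii), because (ii) rests on (i), and both (i) and (iv) reduce quickly to the doubling hypothesis recorded in Lemma \ref{0507-1}(i). For part (i), I would write $\om^*(r)=\int_r^1\om(s)\log\frac{s}{r}\,s\,ds$ and estimate the kernel $\log\frac{s}{r}$ on $(\tfrac12,1)$. The upper bound is immediate from $\log\frac{s}{r}\le\frac{s-r}{r}\le 2(1-r)$ together with $s\le1$, giving $\om^*(r)\le 2(1-r)\int_r^1\om(s)ds=2(1-r)\hat{\om}(r)$. For the lower bound the kernel degenerates as $s\to r$, so I would discard the near-diagonal part and keep only $s\in(\frac{1+r}{2},1)$; there $\log\frac{s}{r}\ge\log\frac{1+r}{2r}=\log(1+\frac{1-r}{2r})\approx 1-r$ uniformly on $(\tfrac12,1)$, and $s\ge\frac34$. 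This yields $\om^*(r)\gtrsim(1-r)\hat{\om}(\frac{1+r}{2})$, and the doubling inequality $\hat{\om}(r)<C\hat{\om}(\frac{1+r}{2})$ upgrades it to $\om^*(r)\gtrsim(1-r)\hat{\om}(r)$.

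Part (iv), with $\hat{\om}(z)$ read as $\hat{\om}(|z|)$, is then a one-line consequence of Lemma \ref{0507-1}(i): fix $b>0$ so that $\hat{\om}(t)/(1-t)^b$ is essentially increasing and assume $|a|\le|z|$. Monotonicity of $\hat{\om}$ gives $\hat{\om}(|z|)\le\hat{\om}(|a|)$, while the essentially increasing property gives $\hat{\om}(|a|)\lesssim\hat{\om}(|z|)\big(\frac{1-|a|}{1-|z|}\big)^b\lesssim\hat{\om}(|z|)$ since $\frac{1-|a|}{1-|z|}<C$; combining the two bounds gives $\hat{\om}(|z|)\approx\hat{\om}(|a|)$. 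For part (ii), set $W(t)=(1-t)^\alpha\om^*(t)$, so that by (i) one has $W(t)\approx(1-t)^{\alpha+1}\hat{\om}(t)$ near $1$; it then suffices to check the regularity ratio $\frac{\int_t^1 W(s)ds}{(1-t)W(t)}\approx1$, i.e. $\int_t^1(1-s)^{\alpha+1}\hat{\om}(s)\,ds\approx(1-t)^{\alpha+2}\hat{\om}(t)$. The upper bound follows from $\hat{\om}(s)\le\hat{\om}(t)$ and $\int_t^1(1-s)^{\alpha+1}ds=\frac{(1-t)^{\alpha+2}}{\alpha+2}$, where the hypothesis $\alpha>-2$ is exactly what guarantees convergence; the lower bound follows from the same essentially increasing estimate $\hat{\om}(s)\gtrsim\hat{\om}(t)\big(\frac{1-s}{1-t}\big)^b$, integrated against $(1-s)^{\alpha+1}$.

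Part (iii) I would reduce to a single real variable by polar coordinates and radiality of $\om$, obtaining $\om(S_a)=\sigma(Q_a)\int_{|a|}^1\om(\rho)\,2n\rho^{2n-1}\,d\rho$. The classical estimate $\sigma(Q(\xi,r))\approx r^{2n}$ for nonisotropic balls gives $\sigma(Q_a)\approx(1-|a|)^n$, and it remains to show $\int_{|a|}^1\om(\rho)\rho^{2n-1}d\rho\approx\int_{|a|}^1\om(\rho)d\rho=\hat{\om}(|a|)$. The upper bound is trivial since $\rho^{2n-1}\le1$. For the lower bound, if $|a|\ge\tfrac12$ then $\rho^{2n-1}\gtrsim1$ on the range; if $|a|<\tfrac12$ I would instead estimate $\hat{\om}(|a|)\le\hat{\om}(0)<C\hat{\om}(\tfrac12)\lesssim\int_{1/2}^1\om(\rho)\rho^{2n-1}d\rho\le\int_{|a|}^1\om(\rho)\rho^{2n-1}d\rho$, where the crucial step $\hat{\om}(0)<C\hat{\om}(\tfrac12)$ is the doubling condition applied at $r=0$.

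I expect the main obstacle to be the lower bound in part (i): it is the only place where the logarithmic kernel and the doubling hypothesis must be played against each other, and the only route I see is to sacrifice the near-diagonal contribution and verify that $\log\frac{1+r}{2r}\approx 1-r$ holds \emph{uniformly} on $(\tfrac12,1)$, not merely asymptotically as $r\to1$. Once (i) is in hand, the remaining parts are essentially bookkeeping built on (i), the two monotonicity characterizations of $\hD$ and $\check{\dD}$ in Lemma \ref{0507-1}(i)--(ii), and the standard surface-measure estimate for the balls $Q(\xi,r)$.
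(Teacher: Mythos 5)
Your proposal is correct, but it is organized quite differently from the paper's treatment. The paper does not reprove (i)--(iii) at all: it cites Lemmas 1.6 and 1.7 of Pel\'aez--R\"atty\"a for (i) and (ii) and an earlier paper of the authors for (iii), and only writes out (iv), which it derives by a detour through the associated weight $\om^*$ --- namely, $(ii)$ with $\alpha=0$ gives $\om^*\in\R$, Lemma \ref{0507-1}(iii) then gives two-sided power-type monotonicity for $\om^*$, and $(i)$ transfers the resulting estimate $\om^*(x)\approx\om^*(y)$ back to $\hat{\om}(x)\approx\hat{\om}(y)$. Your argument for (iv) is more direct and arguably cleaner: you apply the essentially-increasing characterization of $\hD$ from Lemma \ref{0507-1}(i)(b) to $\hat{\om}$ itself, together with the trivial monotonicity $\hat{\om}(|z|)\le\hat{\om}(|a|)$, and never need $\om^*$ or the notion of regular weight. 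Your proofs of (i)--(iii) are essentially the standard ones behind the cited results (splitting off the near-diagonal part of the logarithmic kernel and invoking doubling at $\frac{1+r}{2}$ for (i); the two-sided power bounds on $\hat{\om}$ for (ii); polar coordinates plus $\sigma(Q(\xi,r))\approx r^{2n}$ for (iii)), and all the individual steps check out, including the uniform comparability $\log\frac{1+r}{2r}\approx 1-r$ on $(\tfrac12,1)$ and the use of $\alpha>-2$ to make $\int_t^1(1-s)^{\alpha+1}ds$ finite. What your self-contained route buys is independence from the external references; what the paper's route buys is brevity and, in (iv), a uniform mechanism (regularity of $(1-t)^{\alpha}\om^*(t)$) that is reused elsewhere in the paper.
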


\begin{proof} $(i)$ and $(ii)$  are Lemmas 1.6 and  1.7 in \cite{PjaRj2014book},  respectively. $(iii)$ was proved in \cite{DjLsLxSy2019arxiv}.
$(iv)$ can be proved straightly by $(i), (ii)$ and Lemma \ref{0507-1}. For the benefit of readers, we give a detailed proof.

Suppose $\om\in\hD$. Then there exist $a,b>-1$ and $\delta\in(0,1)$ such that (\ref{0515-1}) holds for $\om^*$. Then, for all $\delta<x\leq y<1$ such that $\frac{1}{C}\leq\frac{1-x}{1-y}\leq C$,  we have
\begin{align*}
 1\approx \left( \frac{1-x}{1-y}\right)^a
\leq \frac{\om^*(x)}{\om^*(y)}
\leq \left( \frac{1-x}{1-y}\right)^b
\approx 1.
\end{align*}
If  $x\leq \delta$ and $\frac{1}{C}\leq\frac{1-x}{1-y}\leq C$, $\hat{\om}(x)\approx\hat{\om}(y)$ is obvious. So, we have
\begin{align*}
\hat{\om}(z)\approx\hat{\om}(a),\,\mbox{ if }\, \frac{1}{C}<\frac{1-|z|}{1-|a|}<C.
\end{align*}
The proof is complete.
\end{proof}

For a Banach space or a complete metric space $X$  and a positive Borel measure $\mu$ on $\BB$, $\mu$ is a  $q-$Carleson measure for $X$ means that the identity operator $Id:X\to L_\mu^q$ is bounded. When $0<p\leq q<\infty$ and $\om\in\hD$, a characterization of $q-$Carleson measure for $A_\om^p$ was obtained in \cite{DjLsLxSy2019arxiv}.
\msk

\noindent{\bf Theorem A. }{\it
Let $0<p\leq q<\infty$, $\om\in \hD$, and $\mu$ be a positive Borel measure on $\BB$. Then  $\mu$ is a $q$-Carleson measure for $A_\om^p$ if and only if
  \begin{align}\label{0109-8}
  \sup_{a\in\BB} \frac{\mu(S_a)}{(\om(S_a))^{\frac{q}{p}}}<\infty.
  \end{align}
  Moreover, if $\mu$  is a $q$-Carleson measure for $A_\om^p$, then the identity operator $Id:A_\om^p\to L_\mu^q$ satisfies
  $$\|Id\|_{A_\om^p\to L_\mu^q}^q \approx \sup_{a\in\BB} \frac{\mu(S_a)}{(\om(S_a))^{\frac{q}{p}}}.$$
}

\section{Some estimates about $B_z^\om$ with $\om\in\hD$}
In this section, we consider the reproducing kernel of $A_\om^2$ and give some estimates for it. First, let's recall some notations.
For all $f\in H(\BB)$, the Taylor series of $f$ at origin, which converges absolutely and uniformly on each compact subset of $\BB$,  is
$$f(z)=\sum_{m}a_mz^m, \,\,z\in\BB.$$
Here the summation is over all multi-indexs
$m=(m_1,m_2,\cdots,m_n),$ where each $m_k$ is a nonnegative integer and
$z^m=z_1^{m_1}z_2^{m_2}\cdots z_{n}^{m_n}.$  Let $|m|=m_1+m_2+\cdots+m_n, \,\, m!=m_1!m_2!\cdots m_n!,$ and
$f_k(z)=\sum_{|m|=k }a_mz^m.$  Then the Taylor series of $f$ can be written as
$$f(z)=\sum_{k=0}^\infty f_k(z),$$
which is called the homogeneous expansion of $f$.

\begin{Lemma}\label{0313-1} Suppose $\om\in\hD$. Then,
$$B_z^\om(w)=\frac{1}{2n!}\sum_{k=0}^\infty \frac{(n-1+k)!}{k!\om_{2n+2k-1}} \langle w,z\rangle^k,$$
and
$$\|B_z^\om\|_{\B}\approx\frac{1}{\om(S_z)}\approx \|B_z^\om\|_{H^\infty},\,\,z\in\BB.$$
Here and henceforth, $\om_s=\int_0^1 r^s\om(r)dr$.
\end{Lemma}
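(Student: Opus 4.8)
The plan is to prove the two parts of Lemma \ref{0313-1} separately: first the explicit series expansion of the reproducing kernel, then the two-sided norm estimates.

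For the series expansion, I would begin from the fact that $A_\om^2$ is a Hilbert space whose reproducing property singles out $B_z^\om$. Because $\om$ is radial, the monomials $\{z^m\}$ form an orthogonal (but not orthonormal) basis of $A_\om^2$. I would compute $\|z^m\|_{A_\om^2}^2=\int_\BB|z^m|^2\om(z)\,dV(z)$ by passing to polar coordinates on $\BB$: writing $z=r\xi$ with $r\in[0,1)$ and $\xi\in\SS$, the integral factors into a radial part $\int_0^1 r^{2|m|}\om(r)\,r^{2n-1}\,dr$ and a surface part $\int_\SS|\xi^m|^2\,d\sigma(\xi)$. The surface integral is a standard computation giving a factor of $\frac{(n-1)!\,m!}{(n-1+|m|)!}$ (up to the normalization of $d\sigma$), and the radial part is $\frac12\om_{2n-1+2|m|}$ in the notation $\om_s=\int_0^1 r^s\om(r)\,dr$. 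Summing $\frac{z^m\overline{w^m}}{\|z^m\|^2}$ over all multi-indices $m$ and using the multinomial identity $\sum_{|m|=k}\frac{k!}{m!}\langle w,z\rangle$-type grouping $\sum_{|m|=k}\frac{k!}{m!}w^m\overline{z^m}=\langle w,z\rangle^k$ collapses the multi-index sum into a single sum over $k=|m|$, producing the stated closed form with coefficient $\frac{(n-1+k)!}{k!\,\om_{2n+2k-1}}$ up to the overall constant $\frac{1}{2n!}$.

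For the norm estimates, the guiding principle is that all three quantities are comparable to $\frac{1}{\om(S_z)}$, and by Lemma \ref{1210-3}$(iii)$ we have $\om(S_z)\approx(1-|z|)^n\int_{|z|}^1\om(r)\,dr=(1-|z|)^n\hat\om(|z|)$, so it suffices to estimate each norm against $(1-|z|)^n\hat\om(|z|)$. The key analytic input is Lemma \ref{0507-1}$(i)(c)$, which converts the radial moments into tail integrals via $\om_{2n+2k-1}=\int_0^1 r^{2n+2k-1}\om(r)\,dr\approx\hat\om\!\left(1-\frac1{2k}\right)$ for $k\geq1$; combined with Lemma \ref{1210-3}$(iv)$ this lets me replace $\hat\om(1-\frac1{2k})$ by $\hat\om(|z|)$ on the dyadic range $|z|\approx 1-\frac1k$. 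For the $H^\infty$ bound I would estimate $|B_z^\om(w)|$ directly from the series using $|\langle w,z\rangle|\leq|z|$, bounding the sum $\sum_k\frac{(n-1+k)!}{k!\,\om_{2n+2k-1}}|z|^k\approx\sum_k\frac{k^{n-1}}{\hat\om(1-\frac1{2k})}|z|^k$ by a geometric-type series concentrated near $k\approx\frac1{1-|z|}$, yielding the upper bound $\lesssim\frac{1}{(1-|z|)^n\hat\om(|z|)}$; the matching lower bound comes from evaluating at $w=z$ (or at $w$ with $\langle w,z\rangle=|z|^2$), since $B_z^\om(z)=\|B_z^\om\|_{A_\om^2}^2$ and by the reproducing property $\|B_z^\om\|_{H^\infty}\geq|B_z^\om(z)|$. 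For the Bloch bound I would apply $\Re$ termwise, which multiplies the $k$-th coefficient by $k$, and carry out the analogous summation estimate for $(1-|z|^2)|\Re B_z^\om(w)|$.

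The main obstacle I expect is the bookkeeping in the two-sided estimate of the kernel sum, specifically controlling $\sum_{k}\frac{k^{n-1}}{\hat\om(1-\frac1{2k})}\,t^k$ for $t$ near $1$ with sharp constants in both directions. The doubling hypothesis $\om\in\hD$ is exactly what tames this: via Lemma \ref{0507-1}$(i)(b)$, the quantity $\hat\om(1-\frac1{2k})^{-1}$ grows at most polynomially in a controlled way, so the sum behaves like the corresponding sum for a standard weight and localizes around $k\approx(1-t)^{-1}$. Getting the lower bound requires care, since discarding terms in the series loses information; the cleanest route is to compare with the point evaluation $B_z^\om(z)=\|B_z^\om\|_{A_\om^2}^2\approx\frac{1}{\om(S_z)}$, which already gives $\|B_z^\om\|_{H^\infty}\gtrsim\frac{1}{\om(S_z)}$ for free and sidesteps the delicate lower summation entirely, leaving only the upper bounds to be extracted from the series.
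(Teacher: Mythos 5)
Your derivation of the series expansion is essentially the paper's: expand $B_z^\om$ in the orthogonal monomial basis, compute $\|z^m\|_{A_\om^2}^2$ by polar coordinates and the surface integral $\int_\SS|\xi^m|^2\,d\sigma$, and collapse the multi-index sum via $\sum_{|m|=k}\frac{k!}{m!}w^m\overline{z^m}=\langle w,z\rangle^k$. That part is fine. The upper bound $\|B_z^\om\|_{H^\infty}\lesssim\frac{1}{\om(S_z)}$ via $|\langle w,z\rangle|\le|z|$ and the moment asymptotics $\om_{2n+2k-1}\approx\hat\om(1-\frac1k)$ is also the paper's route (the paper converts the sum to $\int_0^{|z|}\hat\om_{n+1}(t)^{-1}dt$ using an identity from Pel\'aez--R\"atty\"a and then bounds it).

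The gap is in your plan for the lower bounds. The chain to be proved is $\frac{1}{\om(S_z)}\lesssim\|B_z^\om\|_\B\lesssim\|B_z^\om\|_{H^\infty}\lesssim\frac{1}{\om(S_z)}$, so the one lower bound you cannot avoid is the \emph{Bloch} lower bound $\|B_z^\om\|_\B\gtrsim\frac{1}{\om(S_z)}$. Your shortcut via $\|B_z^\om\|_{H^\infty}\ge B_z^\om(z)$ only lower-bounds the $H^\infty$ norm, and since the general inequality runs $\|f\|_\B\lesssim\|f\|_{H^\infty}$ (not the reverse), a lower bound on $\|B_z^\om\|_{H^\infty}$ gives nothing for $\|B_z^\om\|_\B$. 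So you cannot ``sidestep the delicate lower summation entirely'': you must lower-bound $(1-|w|^2)|\Re B_z^\om(w)|$ at some point, and the paper does this at $w=z$, where $\langle z,z\rangle=|z|^2>0$ makes every term of $\Re B_z^\om(z)=\frac{1}{2n!}\sum_{k\ge1}\frac{(n-1+k)!}{(k-1)!\,\om_{2n+2k-1}}|z|^{2k}$ positive (so there is no cancellation to worry about); the sum is then comparable to $\int_0^{|z|^2}\frac{dt}{(1-t)^{n+2}\hat\om(t)}$, which is bounded below by $\frac{1}{(1-|z|)\om(S_z)}$ using Lemma \ref{0507-1}(i)(b) (essential monotonicity of $\hat\om(t)/(1-t)^b$). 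A secondary issue: your claim that $B_z^\om(z)=\|B_z^\om\|_{A_\om^2}^2\approx\frac{1}{\om(S_z)}$ is ``for free'' is not justified either --- the lower half of that equivalence requires a normalized test function $f$ with $|f(z)|^2\gtrsim\|f\|_{A_\om^2}^2/\om(S_z)$, which is not among the preliminaries quoted in this paper; but since the $H^\infty$ lower bound follows from the Bloch one anyway, fixing the Bloch lower bound as above makes this point moot.
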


\begin{proof}
Suppose $f\in A_\om^2$ and
$$f(z)=\sum_{m}a_mz^m, \,\,z\in\BB.$$
For any fixed $z\in\BB$, let $$B_z^\om(w)=\sum_{m}b_m(z)w^m.$$
By Lemmas 1.8 and 1.11  in \cite{Zk2005}, we have
\begin{align*}
f(z)
&=\int_\BB f(w)\ol{B_z^\om(w)}\om(w)dV(w)  \\
&=2n\sum_{m} \frac{(n-1)!m!}{(n-1+|m|)!} a_m\ol{b_m(z)}\int_0^1 r^{2n+2|m|-1} \om(r)dr  \\
&=2n!\sum_{m} \frac{m!}{(n-1+|m|)!} a_m\ol{b_m(z)}\om_{2n+2|m|-1}.
\end{align*}
Since $f$ is arbitrary,
$$z^m= \frac{2 n!m!}{(n-1+|m|)!} \om_{2n+2|m|-1}\ol{b_m(z)}.$$
Therefore, we have
\begin{align*}
B_z^\om(w)
&=\frac{1}{2 n!}\sum_{k=0}^\infty \frac{(n-1+k)!}{k!  \om_{2n+2k-1}} \sum_{|m|=k} \frac{|m|!}{m!} w^m\ol{z}^m \\
&=\frac{1}{2n!}\sum_{k=0}^\infty \frac{(n-1+k)!}{k!\om_{2n+2k-1}} \langle w,z\rangle^k.
\end{align*}
Then,
\begin{align*}
\Re B_z^\om(w)
=\frac{1}{2n!}\sum_{k=1}^\infty \frac{(n-1+k)!}{(k-1)!\om_{2n+2k-1}} \langle w,z\rangle^k.
\end{align*}
By Stirling estimate and Lemma \ref{0507-1}, when $\frac{1}{2}\leq |z|<1$, we have
$$
|B_z^\om(w)| \lesssim \sum_{k=1}^\infty \frac{k^{n-1}|z|^{k}}{\om_{2n+2k-1}}
\approx \sum_{k=n}^\infty \frac{(k+1)^{n-1}|z|^{k}}{\om_{2k+1}},
$$
and
\begin{align*}
|\Re B_z^\om(z)|
\approx\sum_{k=1}^\infty \frac{k^n|z|^{2k}}{\om_{2n+2k-1}}
\approx \sum_{k=n+1}^\infty \frac{(k+1)^n|z|^{2k}}{\om_{2k+1}}.
\end{align*}
Let $\hat{\om}_\alpha(t)=(1-t)^\alpha\hat{\om}(t)$ for any fixed $\alpha\in \RR$.
Using (20) in \cite{PjRj2016jmpa} and Lemma \ref{1210-3}, we have
$$
\sum_{k=n}^\infty \frac{(k+1)^{n-1}|z|^{k}}{\om_{2k+1}}
\approx \int_0^{|z|}\frac{1}{\hat{\om}_{n+1}(t)}dt
\lesssim \frac{1}{(1-|z|)^n\hat{\om}(z)}
\approx \frac{1}{\om(S_z)},
$$
and
$$
\sum_{k=n}^\infty \frac{(k+1)^n|z|^{2k}}{\om_{2k+1}}
\approx \int_0^{|z|^2}\frac{1}{(1-t)^{n+2}\hat{\om}(t)}dt.
$$
By Lemma \ref{0507-1}, there exists a constant $b>0$ such that $\frac{\hat{\om}(t)}{(1-t)^b}$ is essentially increasing.
So, by Lemma \ref{1210-3},
$$
\int_0^{|z|^2}\frac{1}{(1-t)^{n+2}\hat{\om}(t)}dt
\gtrsim \frac{(1-|z|^2)^b}{\hat{\om}(|z|^2)} \int_0^{|z|^2}\frac{1}{(1-t)^{n+2+b}}dt
\approx \frac{1}{(1-|z|)\om(S_z)}.
$$
Therefore, when $\frac{1}{2}\leq |z|<1$, we have
\begin{align}\label{0506-1}
\|B_z^\om\|_{H^\infty}
\lesssim \frac{1}{\om(S_z)},
\,\,\,\,\,
\frac{1}{\om(S_z)}\lesssim \|B_z^\om\|_\B.
\end{align}
When $|z|<\frac{1}{2}$, since $\om(S_z)\approx 1$,  $\|B_z^\om\|_{\B}\geq |B_z^\om(0)|\gtrsim 1$, and
$$|B_z^\om(w)|\leq \frac{1}{2n!}\sum_{k=0}^\infty \frac{(n-1+k)!}{k!\om_{2n+2k-1}}\frac{1}{2^k}<\infty,$$
(\ref{0506-1}) also holds.
By the well known fact that $\|f\|_{\B}\lesssim \|f\|_{H^\infty}$, we obtain the desired result.  The proof is complete.
\end{proof}

\begin{Lemma}\label{0313-2}
Let $0<p<\infty$, $\om\in\hD$. Then the following assertions hold.
\begin{enumerate}[(i)]
  \item When $|rz|>\frac{1}{4}$, we have
     $$M_p^p(r, B_z^\om) \approx \int_{0}^{r|z|} \frac{1}{\hat{\om}(t)^p (1-t)^{np-n+1}}dt,$$
     and
     $$M_p^p(r, \Re B_z^\om) \approx \int_{0}^{r|z|} \frac{1}{\hat{\om}(t)^p (1-t)^{(n+1)p-n+1}}dt.$$
  \item If $\upsilon\in\hD$, when $|z|>\frac{6}{7}$, we have
  $$\|B_z^\om\|_{A_\upsilon^p}^p \approx   \int_{0}^{|z|} \frac{\hat{\upsilon}(t)}{\hat{\om}(t)^p (1-t)^{np-n+1}}dt,$$
  and
  $$  \|\Re B_z^\om\|_{A_\upsilon^p}^p\approx  \int_0^{|z|}  \frac{\hat{\upsilon}(t)}{\hat{\om}(t)^p (1-t)^{(n+1)p-n+1}}dt.$$
\end{enumerate}
\end{Lemma}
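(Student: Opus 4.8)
The plan is to exploit that, by Lemma \ref{0313-1}, both $B_z^\om$ and $\Re B_z^\om$ are holomorphic functions of the single variable $\langle w,z\rangle$ with nonnegative Taylor coefficients,
\begin{align*}
B_z^\om(w)=\sum_{k\geq0}c_k\langle w,z\rangle^k,\qquad \Re B_z^\om(w)=\sum_{k\geq1}kc_k\langle w,z\rangle^k,\qquad c_k=\frac{(n-1+k)!}{2n!\,k!\,\om_{2n+2k-1}}.
\end{align*}
By Stirling's formula $c_k\approx(k+1)^{n-1}/\om_{2k+1}$ and $kc_k\approx(k+1)^{n}/\om_{2k+1}$, so the two halves of $(i)$ are the \emph{same} statement for a generic series $F(\zeta)=\sum_k a_k\zeta^k$ with $a_k\approx(k+1)^\gamma/\om_{2k+1}\geq0$, taking $\gamma=n-1$ and $\gamma=n$ respectively; I carry $\gamma$ as a parameter. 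Writing $\zeta_0=z/|z|$ and $\rho=r|z|$, the quantity $M_p^p(r,B_z^\om)=\int_\SS|F(\rho\langle\xi,\zeta_0\rangle)|^p\,d\sigma(\xi)$ depends on $\xi$ only through $\langle\xi,\zeta_0\rangle$, so everything reduces to a weighted $L^p$ mean of a power series with nonnegative coefficients, and the doubling hypothesis will provide two-sided control.

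The first key step is a pointwise two-sided estimate for $F$. As in the proof of Lemma \ref{0313-1}, formula (20) of \cite{PjRj2016jmpa} together with Lemma \ref{1210-3} gives, for $x\in[0,1)$,
\begin{align*}
F(x)\approx\int_0^x\frac{dt}{(1-t)^{\gamma+2}\hat\om(t)}\approx\frac{1}{(1-x)^{\gamma+1}\hat\om(x)};
\end{align*}
the upper bound is the one already used there, and the matching lower bound follows by integrating over $[x',x]$ with $1-x'\approx1-x$ and invoking that $\hat\om(t)/(1-t)^b$ is essentially increasing (Lemma \ref{0507-1}). Since the coefficients are nonnegative one gets $|F(\zeta)|\lesssim|1-\zeta|^{-(\gamma+1)}\hat\om(1-|1-\zeta|)^{-1}$ throughout $\DD$, with the reverse inequality valid whenever $\zeta$ stays in a Stolz angle at a boundary point.

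I would then evaluate $M_p^p$ by a coarea decomposition of $\SS$ according to the size $\delta=|1-\rho\langle\xi,\zeta_0\rangle|$. Using the standard fact $\sigma(\{\xi:|1-\langle\xi,\zeta_0\rangle|\leq\delta\})\approx\delta^n$ (see \cite{Rw1980,Zk2005}) and $|1-\rho\langle\xi,\zeta_0\rangle|\approx\max\{1-\rho,|1-\langle\xi,\zeta_0\rangle|\}$, the part of the sphere on which $\delta$ lives at a dyadic scale has measure $\approx\delta^n$ and there $|F(\rho\langle\xi,\zeta_0\rangle)|\approx\delta^{-(\gamma+1)}\hat\om(1-\delta)^{-1}$ (two-sided, since a fixed fraction of that cap lies in a Stolz angle). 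Summing the contributions $\delta^{n}\cdot\delta^{-(\gamma+1)p}\hat\om(1-\delta)^{-p}$ over the scales $\delta\in(1-\rho,1)$ and substituting $t=1-\delta$ yields
\begin{align*}
M_p^p(r,B_z^\om)\approx\int_{1-\rho}^1\frac{d\delta}{\delta^{(\gamma+1)p-n+1}\hat\om(1-\delta)^p}=\int_0^{\rho}\frac{dt}{(1-t)^{(\gamma+1)p-n+1}\hat\om(t)^p},
\end{align*}
which is exactly $(i)$ upon inserting $\gamma=n-1$ and $\gamma=n$. Note that the sharp shift $-n+1$ comes precisely from the cap measure $\delta^{n}\,d\delta/\delta=\delta^{n-1}d\delta$, i.e.\ from the angular integration near the peak.

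For $(ii)$ I would pass to polar coordinates, $\|B_z^\om\|_{A_\upsilon^p}^p=2n\int_0^1 r^{2n-1}\upsilon(r)M_p^p(r,B_z^\om)\,dr$, substitute $(i)$, and interchange the order of integration to obtain $\int_0^{|z|}\frac{1}{\hat\om(t)^p(1-t)^{np-n+1}}\big(\int_{t/|z|}^1 r^{2n-1}\upsilon(r)\,dr\big)dt$; the contribution of the range $\rho\leq\tfrac14$, where $(i)$ does not apply, is bounded by a $z$-independent constant (the coefficients $c_k$ do not depend on $z$) and is absorbed. It then remains to replace the inner bracket by $\hat\upsilon(t)$. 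The estimate $\int_{t/|z|}^1 r^{2n-1}\upsilon\,dr\lesssim\hat\upsilon(t)$ is immediate, and the $\Re B_z^\om$ statement is identical with $n$ replaced by $n+1$. The hardest points of the whole argument are the sharp two-sided pointwise kernel estimate under the mere doubling hypothesis (especially the Stolz-angle lower bound, where one must rule out cancellation) and, in $(ii)$, the reverse weight estimate: $\hat\upsilon(t/|z|)$ can be far smaller than $\hat\upsilon(t)$ near $t=|z|$, so one must show that this near-endpoint region contributes negligibly to the full integral, a domination-by-bulk argument resting on $\upsilon\in\hD$ and the endpoint behaviour of the radial integral above.
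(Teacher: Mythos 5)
Your reduction to a single power series $F(\zeta)=\sum_k a_k\zeta^k$ with $a_k\approx (k+1)^\gamma/\om_{2k+1}$ and the dyadic--cap bookkeeping are sound, but the whole argument rests on a two--sided \emph{pointwise} estimate $|F(\zeta)|\approx |1-\zeta|^{-(\gamma+1)}\hat\om(1-|1-\zeta|)^{-1}$ (upper bound on all of $\DD$, lower bound on Stolz angles), and that is precisely what is not available under the bare hypothesis $\om\in\hD$. Formula (20) of \cite{PjRj2016jmpa} together with Lemma \ref{1210-3} only yields the \emph{diagonal} estimate $F(x)\approx (1-x)^{-(\gamma+1)}\hat\om(x)^{-1}$ for real $x$; its trivial consequence $|F(\zeta)|\le F(|\zeta|)$ controls $F$ in terms of $1-|\zeta|$, not $|1-\zeta|$, and on a cap of parameter $\delta$ a substantial portion of the points $\lambda=\rho\langle\xi,\zeta_0\rangle$ satisfy $1-|\lambda|\approx(1-\rho)+\delta^2\ll\delta$, so the diagonal bound is far too weak for your summation over scales. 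The genuine off--diagonal upper bound needs a summation-by-parts argument controlling higher differences of $1/\om_{2k+1}$, and the Stolz-angle lower bound requires ruling out cancellation among the terms with $k\gtrsim 1/|1-\zeta|$, which carry a fixed fraction of the mass $\sum_k a_k|\zeta|^k$; neither is proved in the sources you invoke for a general doubling weight, and the lower bound in particular is exactly the kind of pointwise statement that is avoided in this theory. You name this as ``the hardest point'' but supply no argument, so part $(i)$ --- and with it $(ii)$ --- is not established.

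The paper sidesteps the issue entirely: for $n\ge 2$ it converts the surface integral into a plane integral via the exact identity $\int_\SS g(\langle\xi,e_1\rangle)\,d\sigma(\xi)=c_n\int_{\DD} g(\lambda)(1-|\lambda|^2)^{n-2}dA(\lambda)$ (Lemma 1.9 in \cite{Zk2005}), identifies the result as $|rz|^{-(n-1)p}\|(B^{\om,1}_{r|z|})^{(n)}\|^p_{A^p_{n-2}(\DD)}$, and then quotes the two--sided one--dimensional estimate of Theorem 1 in \cite{PjRj2016jmpa}, which is proved by $L^p$--mean techniques for power series with nonnegative coefficients rather than by pointwise kernel bounds. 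To rescue your scheme you would have to either restrict to weights for which the pointwise two--sided estimate is known (e.g.\ regular weights) or replace the cap decomposition by an $L^p$--mean argument of that type. Finally, in $(ii)$ you correctly isolate the remaining difficulty --- that $\int_{t/|z|}^1 r^{2n-1}\upsilon(r)\,dr$ may be much smaller than $\hat\upsilon(t)$ for $t$ near $|z|$ --- but the needed reverse inequality (the paper's chain (\ref{0522-3})--(\ref{0522-5}), which uses $\upsilon\in\hD$ to dominate the range $t\in(2|z|-1,|z|)$ by the range $t\in(4|z|-3,2|z|-1)$) is asserted, not proved.
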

\begin{proof}
When $n=1$, the theorem was proved in \cite{PjRj2016jmpa}, so we always assume $n\geq 2$.
Since we will use some results on $A_\om^p(\D)$, for brief, the symbol $A_\om^p$ only means $A_\om^p(\BB)$ with $n\geq 2$.
Meanwhile,  let $B_z^{\om,1}$ denote the reproducing kernel of $A_\om^2(\D)$.
Recall that, on the unit disk, $dA_\alpha(z)=c_\alpha(1-|z|^2)^\alpha dA(z)$, where $dA(z)$ is the normalized area measure on $\D$.

By Lemma \ref{0313-1}, we have
$$\Re B_z^\om (w)=\frac{1}{2n!}\sum_{k=1}^\infty \frac{(n-1+k)!}{(k-1)!\om_{2n+2k-1}}\langle w,z\rangle^k. $$
Let $e_1=(1,0,\cdots,0)$.
When $|rz|>0$, by rotation transformation and Lemma 1.9 in \cite{Zk2005},  we have
\begin{align*}
M_p^p(r, \Re B_z^\om)
&=  M_p^p(r, \Re B_{|z|e_1}^\om)
=\frac{1}{2n!}\int_{\SS}\left|\sum_{k=1}^\infty \frac{(n-1+k)!}{(k-1)!\om_{2n+2k-1}}\langle r\eta,|z|e_1\rangle^k\right|^p d\sigma(\eta)  \\
&\approx \int_{\D} \left|\sum_{k=1}^\infty \frac{(n-1+k)! }{(k-1)!\om_{2n+2k-1}}(r|z|\xi)^k\right|^p (1-|\xi|^2)^{n-2}dA(\xi)  \\
&=\frac{1}{|rz|^{(n-1)p}}  \int_{\D} \left|\sum_{k=1}^\infty \frac{|rz|^{n+k-1}(\xi^{n+k-1})^{(n)} }{\om_{2(n+k-1)+1}}\right|^p |\xi|^pdA_{n-2}(\xi)  \\
&\approx \frac{1}{|rz|^{(n-1)p}}  \int_{\D} \left|\sum_{k=0}^\infty \frac{|rz|^{k}(\xi^{k})^{(n)} }{\om_{2k+1}}\right|^p dA_{n-2}(\xi)  \\
&=\frac{1}{|rz|^{(n-1)p}}  \|(B_{r|z|}^{\om,1})^{(n)}\|_{A_{n-2}^p(\D)}^p.
\end{align*}
When $r|z|>\frac{1}{4}$, by Theorem 1 in \cite{PjRj2016jmpa}, we have
\begin{align*}
M_p^p(r, \Re B_z^\om)
\approx \int_{0}^{r|z|} \frac{(1-t)^{n-1}}{\hat{\om}(t)^p (1-t)^{p(n+1)}}dt
=\int_{0}^{r|z|} \frac{1}{\hat{\om}(t)^p (1-t)^{p(n+1)-n+1}}dt.
\end{align*}
Therefore, when $|z|>\frac{6}{7}$, by Fubini's theorem, we have
\begin{align*}
\|\Re B_z^\om\|_{A_\upsilon^p}^p
&\approx \int_{\frac{1}{2}}^1 r^{2n-1}\upsilon(r)M_p^p(r,\Re B_z^\om)dr
=\int_0^{|z|}  \frac{\int_{\max\{\frac{t}{|z|},\frac{1}{2}\}}^1 r^{2n-1}\upsilon(r)dr}{\hat{\om}(t)^p (1-t)^{p(n+1)-n+1}} dt.
\end{align*}

When $0\leq t\leq \frac{|z|}{2}$, we have
\begin{align}\label{0522-1}\int_{\max\{\frac{t}{|z|},\frac{1}{2}\}}^1 r^{2n-1}\upsilon(r)dr
=\int_\frac{1}{2}^1 r^{2n-1}\upsilon(r)dr\approx 1\approx\hat{\upsilon}(t).
\end{align}

When $\frac{|z|}{2}\leq t\leq |z|$, we have
\begin{align}\label{0522-2}
\int_{\max\{\frac{t}{|z|},\frac{1}{2}\}}^1 r^{2n-1}\upsilon(r)dr
=\int_\frac{t}{|z|}^1 r^{2n-1}\upsilon(r)dr
\leq  \hat{\upsilon}(t).
\end{align}
By $\upsilon\in\hD$ and Lemma \ref{0507-1}, there exists a constant $b>0$ such that $\frac{\hat{\upsilon}(t)}{(1-t)^b}$ is essentially increasing. So,
\begin{align}
\int_\frac{|z|}{2}^{|z|}  \frac{\int_{\max\{\frac{t}{|z|},\frac{1}{2}\}}^1 r^{2n-1}\upsilon(r)dr}{\hat{\om}(t)^p (1-t)^{p(n+1)-n+1}} dt
\gtrsim & \int_\frac{|z|}{2}^{2|z|-1}  \frac{\hat{\upsilon}(\frac{t}{|z|})}{\hat{\om}(t)^p (1-t)^{p(n+1)-n+1}} dt  \nonumber \\
\gtrsim &\int_\frac{|z|}{2}^{2|z|-1}  \frac{\hat{\upsilon}(t)}{\hat{\om}(t)^p (1-t)^{p(n+1)-n+1}} \left(\frac{1-\frac{t}{|z|}}{1-t}\right)^b dt \nonumber\\
\gtrsim & \int_\frac{|z|}{2}^{2|z|-1}  \frac{\hat{\upsilon}(t)}{\hat{\om}(t)^p (1-t)^{p(n+1)-n+1}}  dt,\label{0522-3}
\end{align}
where the last  estimate follows by
$$\frac{1}{|z|}\frac{|z|-t}{1-t}\geq \frac{1}{|z|}\frac{|z|-(2|z|-1)}{1-(2|z|-1)}\gtrsim 1,\,\mbox{ for all }\,t\in(\frac{|z|}{2},2|z|-1)\,\mbox{ and }\,|z|>\frac{6}{7}.$$
Meanwhile, by $\om,\upsilon\in\hD$ and Lemma \ref{1210-3}, we have
\begin{align}
\int_{\frac{|z|}{2}}^{2|z|-1}  \frac{\hat{\upsilon}(t)}{\hat{\om}(t)^p (1-t)^{p(n+1)-n+1}}dt
&\geq \int_{4|z|-3}^{2|z|-1}  \frac{\hat{\upsilon}(t)}{\hat{\om}(t)^p (1-t)^{p(n+1)-n+1}}dt \nonumber\\
&\approx \frac{\hat{\upsilon}(2|z|-1)}{\hat{\om}(2|z|-1)^p (1-|z|)^{p(n+1)-n}} \nonumber\\
&\approx \int_{2|z|-1}^{|z|}  \frac{\hat{\upsilon}(t)}{\hat{\om}(t)^p (1-t)^{p(n+1)-n+1}}dt. \label{0522-4}
\end{align}
By (\ref{0522-3}) and (\ref{0522-4}), we have
\begin{align}
\int_\frac{|z|}{2}^{|z|}  \frac{\int_{\max\{\frac{t}{|z|},\frac{1}{2}\}}^1 r^{2n-1}\upsilon(r)dr}{\hat{\om}(t)^p (1-t)^{p(n+1)-n+1}} dt
\gtrsim&2\int_\frac{|z|}{2}^{2|z|-1}  \frac{\hat{\upsilon}(t)dt}{\hat{\om}(t)^p (1-t)^{p(n+1)-n+1}} dt     \nonumber\\
\gtrsim&\left(\int_\frac{|z|}{2}^{2|z|-1}+\int_{2|z|-1}^{|z|}\right)  \frac{\hat{\upsilon}(t)dt}{\hat{\om}(t)^p (1-t)^{p(n+1)-n+1}} dt     \nonumber\\
= & \int_\frac{|z|}{2}^{|z|}  \frac{\hat{\upsilon}(t)}{\hat{\om}(t)^p (1-t)^{p(n+1)-n+1}}  dt.   \label{0522-5}
\end{align}

So, if $|z|>\frac{6}{7}$, by (\ref{0522-1}) and (\ref{0522-2}), we have
\begin{align*}
\int_0^{|z|}  \frac{\int_{\max\{\frac{t}{|z|},\frac{1}{2}\}}^1 r^{2n-1}\upsilon(r)dr}{\hat{\om}(t)^p (1-t)^{p(n+1)-n+1}} dt
\lesssim  \int_0^{|z|}  \frac{\hat{\upsilon}(t)}{\hat{\om}(t)^p (1-t)^{p(n+1)-n+1}}  dt,
\end{align*}
and by (\ref{0522-1}) and (\ref{0522-5}), we get
\begin{align*}
\int_0^{|z|}  \frac{\int_{\max\{\frac{t}{|z|},\frac{1}{2}\}}^1 r^{2n-1}\upsilon(r)dr}{\hat{\om}(t)^p (1-t)^{p(n+1)-n+1}} dt
&\gtrsim  \int_0^{|z|}  \frac{\hat{\upsilon}(t)}{\hat{\om}(t)^p (1-t)^{p(n+1)-n+1}}  dt.
\end{align*}
Therefore,
\begin{align*}
\|\Re B_z^\om\|_{A_\upsilon^p}^p
&\approx \int_0^{|z|}  \frac{\hat{\upsilon}(t)}{\hat{\om}(t)^p (1-t)^{p(n+1)-n+1}}dt.
\end{align*}
The rest of the lemma can be proved in the same way. The proof is complete.
\end{proof}

\section{Main results and proofs}
In this section, we give main results and proofs in this paper.
We should note that, if $\om\in\hD$, we have
$$\|f\|_{L^\infty(\BB,\om dV)}=\|f\|_{L^\infty(\BB,dV)}.$$
So, let  $L^\infty= L^\infty(\BB,\om dV)=L^\infty(\BB,dV)$ in this section.

\begin{Theorem}\label{0315-1} When  $\om\in\dD$,  $P_\om:L^\infty\to\B$ is bounded and  onto.
\end{Theorem}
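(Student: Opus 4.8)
The plan is to separate the two assertions: boundedness will follow almost immediately from Lemma~\ref{0313-2}, while surjectivity is the substantive part.

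\emph{Boundedness.} I would start from the identity
$$\|P_\om f\|_\B=|P_\om f(0)|+\sup_{z\in\BB}(1-|z|^2)\,|\Re P_\om f(z)|.$$
Since $B_0^\om$ is a constant, $|P_\om f(0)|\le\|f\|_{L^\infty}\,|B_0^\om|\,\om(\BB)\lesssim\|f\|_{L^\infty}$. For the seminorm I would differentiate under the integral sign in the $z$-variable; because $\Re_z\langle z,\xi\rangle^k=k\langle z,\xi\rangle^k$, Lemma~\ref{0313-1} gives $\Re P_\om f(z)=\int_\BB f(\xi)\,\overline{\Re B_z^\om(\xi)}\,\om(\xi)\,dV(\xi)$, and hence
$$(1-|z|^2)\,|\Re P_\om f(z)|\le\|f\|_{L^\infty}\,(1-|z|^2)\,\|\Re B_z^\om\|_{A_\om^1}.$$
Applying Lemma~\ref{0313-2}$(ii)$ with $p=1$ and $\upsilon=\om$ gives, for $|z|>\tfrac{6}{7}$,
$$\|\Re B_z^\om\|_{A_\om^1}\approx\int_0^{|z|}\frac{\hat{\om}(t)}{\hat{\om}(t)\,(1-t)^{2}}\,dt=\int_0^{|z|}\frac{dt}{(1-t)^{2}}\approx\frac{1}{1-|z|},$$
so $(1-|z|^2)\|\Re B_z^\om\|_{A_\om^1}\lesssim1$; for $|z|\le\tfrac67$ this quantity is bounded directly since $z\mapsto\|\Re B_z^\om\|_{A_\om^1}$ is continuous on a compact set. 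This yields $\|P_\om f\|_\B\lesssim\|f\|_{L^\infty}$ and uses only $\om\in\hD$.

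\emph{Surjectivity.} I would construct a bounded linear right inverse $R\colon\B\to L^\infty$. Guided by the computation above, the natural candidate is
$$R(g)(\xi)=g(0)+c_n\,(1-|\xi|^2)\,\Re g(\xi),$$
which lands in $L^\infty$ because $(1-|\xi|^2)|\Re g(\xi)|\le\|g\|_\B$. Writing $g=\sum_m a_m z^m$, using $P_\om 1=1$ (as $1\in A_\om^2$) and the orthogonality of the monomials with respect to $\om\,dV$, a direct coefficient computation gives $P_\om R(g)=g(0)+\sum_{|m|\ge1}d_{|m|}a_m z^m$, where
$$d_k=c_n\,k\Big(1-\frac{\om_{2k+2n+1}}{\om_{2k+2n-1}}\Big)=c_n\,k\,\frac{\om_{2k+2n-1}-\om_{2k+2n+1}}{\om_{2k+2n-1}}.$$
Thus $P_\om R=M_D$, the radial multiplier $M_D\big(\sum_m a_m z^m\big)=\sum_m d_{|m|}a_m z^m$ (with $d_0:=1$), and surjectivity reduces to showing that $M_D$ is an isomorphism of $\B$: then $Q:=R\circ M_D^{-1}$ satisfies $P_\om Q=\mathrm{Id}_\B$, so every $g\in\B$ equals $P_\om(Qg)$ with $Qg\in L^\infty$. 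To control $\{d_k\}$ I would use $\om_{2k+2n-1}-\om_{2k+2n+1}=\int_0^1 r^{2k+2n-1}(1-r^2)\om(r)\,dr$ together with the moment asymptotics $\om_s\approx\hat{\om}(1-\tfrac1s)$ from Lemma~\ref{0507-1}, expecting $k(\om_{2k+2n-1}-\om_{2k+2n+1})\approx\om_{2k+2n-1}$, so that $\{d_k\}$ is bounded above and below after choosing $c_n$. Here the two halves of $\dD$ play complementary roles: the upper bound $d_k\lesssim1$ reflects $\om\in\hD$ (essential monotonicity of $\hat{\om}(t)/(1-t)^b$), whereas the lower bound $d_k\gtrsim1$, equivalently $\om_{2k+2n+1}/\om_{2k+2n-1}\le1-c/k$, is precisely what reverse doubling $\om\in\check{\dD}$ supplies through the essentially decreasing behaviour of $\hat{\om}(t)/(1-t)^a$. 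This matches the expectation that boundedness needs only $\hD$ while surjectivity needs the full $\dD$.

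The main obstacle I anticipate is not these two-sided bounds but the passage from them to the invertibility of $M_D$ on $\B$. A merely bounded-above-and-below multiplier sequence need not act boundedly on the Bloch space, and $d_k$ cannot be forced uniformly close to $1$ for general $\om\in\dD$, so a Neumann-series inversion of $M_D$ is unavailable. I would therefore need genuine regularity of $k\mapsto d_k$ (for instance a summable-variation estimate for $\{d_{k+1}-d_k\}$, again wrung from the doubling and reverse-doubling control of the moments $\om_s$) to guarantee that both $M_D$ and $M_D^{-1}$ map $\B$ into $\B$. An alternative worth trying is to realise $P_\om$ as the Banach-space adjoint of an operator $A_\om^1\to L^1$ under the pairings $\B=(A^1)^*$ and $L^\infty=(L^1)^*$, and to deduce surjectivity from a bounded-below estimate for that operator, trading the multiplier computation for a duality argument. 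This invertibility step is where I expect the technical weight of the proof to lie.
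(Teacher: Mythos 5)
Your boundedness argument is correct and is essentially the paper's own: both reduce the Bloch seminorm of $P_\om f$ to the estimate $\|\Re B_z^\om\|_{A_\om^1}\lesssim (1-|z|)^{-1}$ from Lemma \ref{0313-2}, and both observe that only $\om\in\hD$ is needed for this half.

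The surjectivity half, however, has a genuine gap at exactly the point you flag yourself. Reducing surjectivity to the invertibility of the coefficient multiplier $M_D$ on $\B$ is only a proof if that invertibility is established, and nothing in your proposal does so: the two-sided bound $d_k\approx 1$ (which does follow from $\om\in\dD$, since $k\int_0^1 r^{2k+2n-1}(1-r^2)\om(r)\,dr\approx\om_{2k+2n-1}$ for such weights) gives neither the boundedness of $M_D$ on $\B$ nor that of $M_D^{-1}$, and the ``summable-variation estimate for $\{d_{k+1}-d_k\}$'' you would need is not actually wrung from anything --- $\om\in\dD$ controls the \emph{size} of the moments $\om_s$, not the smoothness of $k\mapsto \om_{2n+2k+1}/\om_{2n+2k-1}$, so this is not a routine verification, and it would moreover have to be formulated for diagonal operators on homogeneous expansions in $\BB$ rather than for Taylor coefficients on $\D$. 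As written, the argument only shows that $P_\om R$ is a diagonal operator with coefficients comparable to $1$, which does not yield surjectivity. The paper takes the duality route you mention in your last sentence as an ``alternative worth trying'': for $g\in\B$ one shows that $f\mapsto\lim_{r\to1}\langle f_r,g\rangle_{A_\om^2}$ is a bounded functional on $A_\om^1$ --- this is where $\om\in\dD$ (and not merely $\hD$) enters, through the Littlewood--Paley formula $\|f\|_{A_\om^1}\approx|f(0)|+\int_\BB|\Re f(z)|(1-|z|)W_1(z)\,dV(z)$ with $W_1(t)=\hat{\om}(t)/(1-t)\in\R$ --- then extends this functional by Hahn--Banach to $L^1(\BB,\om\,dV)$, represents it by some $\vp\in L^\infty$, and uses the self-adjointness of $P_\om$ together with $P_\om(f_r)=f_r$ to conclude $g=P_\om\vp$. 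To complete your version you must either prove a Bloch-multiplier theorem for $\{d_k\}$ and $\{1/d_k\}$ under $\om\in\dD$, or switch to this duality argument.
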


\begin{proof}  For all $f\in L^\infty$, by Lemma \ref{0313-2},  we have
\begin{align*}
|\Re (P_\om f)(z)|
\leq \int_\BB |f(w)||\Re B_z^\om(w)|\om(w)dV(w)
\leq \|f\|_{L^\infty}\|\Re B_z^\om\|_{A_\om^1}
\lesssim \frac{\|f\|_{L^\infty}}{1-|z|}.
\end{align*}
So, $P_\om:L^\infty\to\B$ is bounded.

 By (19) in \cite{DjLsLxSy2019arxiv},  we have
$$
\|f\|_{A_\om^2}^2 =\om(\BB)|f(0)|^2+ 4  \int_{\BB}\frac{|\Re f(z)|^2 }{|z|^{2n}}  \om^{n*}(z)dV(z),
$$
where $$\om^{n*}(z):=\int_{|z|}^1 r^{2n-1}\log\frac{r}{|z|}\om(r)dr.$$
So, for $f,g\in A_\om^2$, we have
$$
\langle f,g\rangle_{A_\om^2}=\om(\BB)f(0)\ol{g(0)}
  + 4\int_{\BB} \frac{\Re f(z)\ol{\Re g(z)}}{|z|^{2n}}\om^{n*}(z)dV(z).
$$
Let $$W_1(t):=\frac{\hat{\om}(t)}{1-t},\,\,\mbox{ and }\,\,W_1(z):=W_1(|z|).$$
Since $\om\in\dD$, by Lemma \ref{0507-1}, there are constants $a,b>0$ such that
$\frac{\hat{\om}(t)}{(1-t)^a}$ is essentially decreasing and $\frac{\hat{\om}(t)}{(1-t)^b}$ is essentially increasing.
Thus, we have
$$\int_r^1 \frac{\hat{\om}(t)}{1-t}dt\lesssim \frac{\hat{\om}(r)}{(1-r)^a}\int_r^1 (1-t)^{a-1}dt\approx \hat{\om}(r),$$
and
$$\int_r^1 \frac{\hat{\om}(t)}{1-t}dt\gtrsim \frac{\hat{\om}(r)}{(1-r)^b}\int_r^1 (1-t)^{b-1}dt\approx \hat{\om}(r).$$
Then,
$$
\hat{W_1}(r)=\int_r^1 \frac{\hat{\om}(t)}{1-t}dt \approx \hat{\om}(r)=(1-t)W_1(t).
$$
Therefore, $W_1\in\R.$
By Lemma \ref{1210-3} and Theorem A, $\|\cdot\|_{A_\om^p}\approx\|\cdot\|_{A_{W_1}^p}$.
Then for all $p>0$, by Theorem 1 in \cite{hu3} we get
\begin{align*}
\|f\|_{A_\om^p}^p\approx |f(0)|^p+\int_\BB |\Re f(z)|^p(1-|z|)^p W_1(z)dV(z).
\end{align*}
For any $f\in H(\BB)$ and $|z|\leq \frac{1}{2}$,  let $f_r(z)=f(rz)$ for $r\in (0,1)$. By Cauchy's fomula, see \cite[Theorem 4.1]{Zk2005} for example,  we have
\begin{align*}
f(z)=f_\frac{3}{4}(\frac{4z}{3})=\int_\SS  \frac{f_{\frac{3}{4}}(\eta)}{(1-\langle \frac{4z}{3},\eta \rangle)^n}d\sigma(\eta).
\end{align*}
After a calculation, when $|z|\leq \frac{1}{2}$,
$$
|f(z)|\leq \|f\|_{A_\om^1},\,\,
|\Re f(z)|\lesssim |z|\|f_{\frac{3}{4}}\|_{H^\infty},\,\mbox{ and }\,
|\Re f(z)|\lesssim  |z|\|f\|_{A_\om^1}.
$$
We note that, when $|z|\geq \frac{1}{2}$, we have
$$\om^{n*}(z)=\int_{|z|}^1 t^{2n-1}\log\frac{t}{|z|}\om(t)dt\approx \int_{|z|} t^{1}\log\frac{t}{|z|}\om(t)dt=\om^*(z).$$
So, if $g\in\B$ and $f\in A_\om^1$, using  Lemma \ref{1210-3}, there exists a $C=C(n,\om,g)$, such that
\begin{align*}
|\langle f_r,g\rangle_{A_\om^2}|
&\leq C\left(\|f_r\|_{A_\om^1}+\|f_r\|_{A_\om^1}\int_{\frac{1}{2}\BB} \frac{\om^{n*}(z)}{|z|^{2n-2}}dV(z) +\int_{\BB\backslash \frac{1}{2}\BB} \frac{|\Re f_r(z){\Re g(z)}|}{|z|^{2n}}\om^{n*}(z)dV(z) \right) \\
& \approx \|f_r\|_{A_\om^1}+ \int_{\BB\backslash \frac{1}{2}\BB} {|\Re f_r(z){\Re g(z)}|}(1-|z|)\hat{\om}(z)dV(z) \\
& \leq  \|f_r\|_{A_\om^1}+ \|g\|_\B \int_{\BB} {|\Re f_r(z)|}\hat{\om}(z)dV(z) \\
& \approx  \|f_r\|_{A_\om^1}+ \|g\|_\B \int_{\BB} {|\Re f_r(z)|}(1-|z|)W_1(z)dV(z) \\
&\lesssim \|f\|_{A_\om^1}+\|g\|_\B \|f\|_{A_\om^1}.
\end{align*}
Therefore,  $g\in \B$ induce an element $F_g$ in $(A_\om^1)^*$  by the formula
$F_g(f)=\lim\limits_{r\to 1}\langle f_r,g\rangle_{A_\om^2}$ for all $f\in A_\om^1$.

On the other hand, the Hahn-Banach theorem and the well known fact (see \cite[Theorem 1.1]{zhu} for example) that
$$(L^1(\BB,\om dV))^*\simeq L^\infty(\BB,\om dV)$$
guarantee the existence of $\vp\in L^\infty$ such that
\begin{align*}
\lim\limits_{r\to 1}\langle f_r,g\rangle_{A_\om^2}=
F_g(f)=\int_\BB f(z)\ol{\vp(z)}\om(z)dV(z)
=\lim_{r\to 1}\int_\BB f_r(z)\ol{\vp(z)}\om(z)dV(z)
\end{align*}
for all $f\in A_\om^1$. Since $P_\om$ is self-adjoint and $P_\om(f_r)=f_r$, we have
$$
\int_\BB f_r(z)\ol{\vp(z)}\om(z)dV(z)
=\int_\BB P_\om(f_r)(z)\ol{\vp(z)}\om(z)dV(z)
=\int_\BB f_r(z)\ol{P_\om(\vp)(z)}\om(z)dV(z).
$$
By the first part of the proof, $P_\om\vp\in\B$.
Thus, $g-P_\om\vp\in\B$ and represents the zero functional. So, $g=P_\om\vp$.
The proof is complete.
\end{proof}

\begin{Remark}
By the above proof, if $\om\in\hD$, then $P_\om:L^\infty\to\B$ is bounded.
\end{Remark}

\begin{Theorem}\label{0413-4}
Suppose $1<p<\infty$ and $\om,\upsilon\in\dD$. Let $q=\frac{p}{p-1}$. Then the following statements are equivalent:
\begin{enumerate}[(i)]
  \item $P_\om^+: L_\upsilon^p\to  L_\upsilon^p$ is bounded;
  \item $P_\om: L_\upsilon^p\to  L_\upsilon^p$ is bounded;
  \item $M:=\sup\limits_{0\leq r<1} \frac{\hat{\upsilon}(r)^{\frac{1}{p}}}{\hat{\om}(r)}
  \left(\int_r^1 \frac{\om(s)^q}{\upsilon(s)^{q-1}}s^{2n-1}ds\right)^\frac{1}{q}<\infty;$
  \item $N:=\sup\limits_{0\leq r <1} \left(\int_0^r \frac{\upsilon(s)}{\hat{\om}(s)^p}s^{2n-1}ds+1\right)^\frac{1}{p}\left(\int_r^1 \frac{\om(s)^q}{\upsilon(s)^{q-1}}s^{2n-1}ds\right)^\frac{1}{q}<\infty.$
\end{enumerate}
\end{Theorem}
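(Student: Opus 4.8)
The plan is to establish the cycle $(i)\Rightarrow(ii)\Rightarrow(iii)\Leftrightarrow(iv)\Rightarrow(i)$. The implication $(i)\Rightarrow(ii)$ is immediate from the pointwise domination $|P_\om f(z)|\le P_\om^+(|f|)(z)$, which gives $\|P_\om f\|_{L_\upsilon^p}\le\|P_\om^+\|\,\|f\|_{L_\upsilon^p}$. The real content splits into the necessity $(ii)\Rightarrow(iii)$, the equivalence of the two analytic conditions $(iii)\Leftrightarrow(iv)$, and the sufficiency $(iv)\Rightarrow(i)$.

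For the necessity I would test $P_\om$ on functions adapted to Carleson blocks. Fix $a\in\BB$ and, after a unitary rotation, assume $a=|a|e_1$; set $f_a=\frac{\om^{q-1}}{\upsilon^{q-1}}\mathbf{1}_{S_a}$. Because the Taylor coefficients of $B_z^\om$ are positive (Lemma \ref{0313-1}) and $\langle\xi,z\rangle$ stays near the positive real axis when $z,\xi\in S_a$, one expects $\mathrm{Re}\,B_z^\om(\xi)\gtrsim\om(S_a)^{-1}$ for $z,\xi\in S_a$; establishing this near-positivity and near-constancy of the kernel on $S_a$ is the one genuinely technical input here. Granting it, $P_\om f_a(z)\gtrsim\om(S_a)^{-1}\int_{S_a}\frac{\om^q}{\upsilon^{q-1}}dV$ for $z\in S_a$, and since $(q-1)p=q$ one computes $\|f_a\|_{L_\upsilon^p}^p=\int_{S_a}\frac{\om^q}{\upsilon^{q-1}}dV$. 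Feeding these into $\|P_\om f_a\|_{L_\upsilon^p}\lesssim\|f_a\|_{L_\upsilon^p}$ and simplifying yields $\upsilon(S_a)^{1/p}(\int_{S_a}\frac{\om^q}{\upsilon^{q-1}}dV)^{1/q}\lesssim\om(S_a)$; replacing the block integrals by their radial counterparts via $\om(S_a)\approx(1-|a|)^n\hat\om(|a|)$, $\upsilon(S_a)\approx(1-|a|)^n\hat\upsilon(|a|)$ (Lemma \ref{1210-3}(iii)) and the geometric identity $\int_{S_a}W\,dV\approx(1-|a|)^n\int_{|a|}^1 W(s)s^{2n-1}ds$ valid for any radial $W\ge 0$, the powers of $(1-|a|)^n$ cancel and condition (iii) drops out after taking the supremum.

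The equivalence $(iii)\Leftrightarrow(iv)$ is a one-variable computation resting on Lemma \ref{0507-1} and Lemma \ref{1210-3}. Writing $J(r)=\int_r^1\frac{\om^q}{\upsilon^{q-1}}s^{2n-1}ds$, the direction $N<\infty\Rightarrow M<\infty$ is pointwise: integrating over a boundary layer and using the reverse doubling of $\upsilon$ (the hypothesis $\upsilon\in\check{\dD}$) gives $\int_0^r\frac{\upsilon(s)s^{2n-1}}{\hat\om(s)^p}ds\gtrsim\frac{\hat\upsilon(r)}{\hat\om(r)^p}$, so the first factor of $N$ dominates that of $M$. The converse $M<\infty\Rightarrow N<\infty$ cannot be done pointwise; instead I would decompose $\int_0^r$ into dyadic annuli $1-r_j=2^j(1-r)$, estimate the $j$-th piece by $\hat\upsilon(r_j)\hat\om(r_j)^{-p}$ using Lemma \ref{1210-3}(iv), insert the bound $\frac{\hat\upsilon(r_j)^{1/p}}{\hat\om(r_j)}\le M\,J(r_j)^{-1/q}$ coming from (iii), and sum the resulting series in $J(r_j)^{-p/q}$ against $J(r)^{p/q}$, the essential monotonicity of $\hat\om$ and $\hat\upsilon$ guaranteeing the required geometric convergence.

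The heart of the theorem is the sufficiency $(iv)\Rightarrow(i)$, which I would prove by a weighted Schur test. After the substitution $g=f\upsilon^{1/p}$, boundedness of $P_\om^+$ on $L_\upsilon^p$ becomes boundedness on $L^p(\BB,dV)$ of the positive operator with kernel $\widetilde K(z,\xi)=|B_z^\om(\xi)|\om(\xi)\upsilon(z)^{1/p}\upsilon(\xi)^{-1/p}$; it then suffices to exhibit a positive radial $h$ with $\int_\BB\widetilde K(z,\xi)h(\xi)^q\,dV(\xi)\lesssim h(z)^q$ and $\int_\BB\widetilde K(z,\xi)h(z)^p\,dV(z)\lesssim h(\xi)^p$. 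I would take $h$ to be a suitable power of $(1-|\cdot|)$ times a power of $\hat\om$, reduce both integrals to one dimension by integrating over spheres (so that $\int_\SS|B_z^\om(\rho\eta)|\,d\sigma(\eta)=M_1(\rho,B_z^\om)\approx\hat\om(\rho|z|)^{-1}$ and, more generally, the $M_p$-estimates of Lemma \ref{0313-2} apply), and close the two inequalities using condition (iv), which is precisely the Muckenhoupt--Bradley criterion for the associated weighted Hardy operator $r\mapsto\int_0^r$. I expect this to be the main obstacle: choosing the exponents in $h$ so that both Schur inequalities hold simultaneously, and then carrying out the one-dimensional estimates with only the essential monotonicity afforded by $\om,\upsilon\in\dD$ rather than full regularity, is delicate, and it is here that both the doubling and the reverse-doubling hypotheses are consumed.
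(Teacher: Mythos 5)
The gap is in your necessity argument $(ii)\Rightarrow(iii)$. It hinges on the pointwise lower bound $\mathrm{Re}\,B_z^\om(\xi)\gtrsim \om(S_a)^{-1}$ for all $z,\xi\in S_a$, and this is precisely the estimate that is \emph{not} available for general $\om\in\hD$ --- indeed it already fails on all of $S_a\times S_a$ for the classical kernels $(1-\langle z,\xi\rangle)^{-(n+1+\alpha)}$: choosing $z,\xi\in S_a$ near the boundary with nonisotropic separation comparable to $\sqrt{1-|a|}$, the argument of $1-\langle z,\xi\rangle$ approaches $\pm\pi/2$, so raising to a power $n+1+\alpha>1$ rotates it past $\pi/2$ and the real part becomes negative. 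The lack of pointwise lower estimates for $B_z^\om$ is the central obstruction in this theory; it is exactly why Lemma \ref{0313-2} gives only integrated ($M_p$ and $A_\upsilon^p$) estimates. The paper sidesteps the issue entirely: it computes the adjoint $P_\om^*$ with respect to $\langle\cdot,\cdot\rangle_{L_\upsilon^2}$ and evaluates it on the monomials $g_j(z)=z_1^j$, for which the coefficient formula of Lemma \ref{0313-1} yields the exact identity $P_\om^*(g_j)(\xi)=\frac{(n-1)!}{(2n-1)!}\,\xi_1^j\,\frac{\om(\xi)}{\upsilon(\xi)}\,\frac{\upsilon_{2n+2j-1}}{\om_{2n+2j-1}}$ with no kernel estimate whatsoever; comparing $\|P_\om^*g_j\|_{L_\upsilon^q}$ with $\|g_j\|_{L_\upsilon^q}$ at the radii $r_j=1-\frac{1}{2j+1}$ then produces $(iii)$. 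Unless you can prove your kernel positivity on a fixed-proportion subregion of $S_a\times S_a$ for every $\om\in\hD$ (which would be a new and substantial result), this step of your proof does not go through, and you should replace it by a test computation that uses only the reproducing identity or the explicit Taylor coefficients.

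The remaining implications are essentially in order. The sufficiency is proved in the paper by exactly the H\"older/Schur scheme you describe, reduced to radial integrals via $M_1(r,B_z^\om)\lesssim\hat{\om}(r|z|)^{-1}$; the one correction is that in this two-weight setting the Schur weight cannot be a power of $(1-|\cdot|)$ times a power of $\hat{\om}$ --- the paper takes $h(z)=\upsilon(z)^{1/p}\bigl(\int_{|z|}^1\om(s)^q\upsilon(s)^{1-q}s^{2n-1}ds\bigr)^{1/(pq)}$, and both the factor $\upsilon^{1/p}$ and the tail integral are indispensable for the two one-dimensional inequalities to close. Your dyadic proof that $M<\infty$ implies $N<\infty$ is a viable alternative to the paper's integration-by-parts bootstrap, but the geometric convergence of $\sum_j J(r_j)^{-p/q}$ is not a consequence of essential monotonicity alone: you must feed condition $(iii)$ itself back in (together with reverse doubling of $\om$, which gives $\hat{\om}(r_j)-\hat{\om}(r_{j-1})\gtrsim\hat{\om}(r_j)$ and hence $J(r_j)-J(r_{j-1})\gtrsim M^{-q}J(r_j)$) to obtain the geometric growth of $J(r_j)$.
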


\begin{proof} When $n=1$, the theorem was proved in \cite{PjRj2019arxiv}. So, we always assume that $n\geq 2$.

$(i)\Rightarrow(ii)$. It is obvious.

$(ii)\Rightarrow(iii)$.  Suppose that $(ii)$ holds.
Let $P_\om^*$ be the adjoint of $P_\om$ with respect to $\langle\cdot,\cdot\rangle_{L_\upsilon^2}$. For all $f,g\in L^\infty$, by Fubini's Theorem,  we have
\begin{align*}
\langle f, P_\om^*g\rangle_{L_\upsilon^2}
&=\langle P_\om f,g\rangle_{L_\upsilon^2}  =\int_{\BB} P_\om f(z)\ol{g(z)}\upsilon(z)dV(z)\\
&=\int_\BB \left(\int_\BB f(\xi)\ol{B_z^\om(\xi)}\om(\xi)dV(\xi)\right) \ol{g(z)}\upsilon(z)dV(z)  \\
&=\int_\BB \left(\int_\BB \ol{g(z)B_z^\om(\xi)}\upsilon(z)dV(z)\right) f(\xi)\om(\xi)dV(\xi)\\
&=\int_\BB \left(\ol{\frac{\om(\xi)}{\upsilon(\xi)}\int_\BB {g(z)B_z^\om(\xi)}\upsilon(z)dV(z)}\right) f(\xi)\upsilon(\xi)dV(\xi).
\end{align*}
Since $L^\infty$ is dense in $L^p_\upsilon$ and $L_\upsilon^q$, then we have
\begin{align}\label{0514-1}
P_\om^*(g)=\frac{\om(\xi)}{\upsilon(\xi)}\int_\BB {g(z)B_z^\om(\xi)}\upsilon(z)dV(z),\,\,\,g\in  L_\upsilon^q.
\end{align}
By hypothesis, $P_\om^*$ is bounded on $L_\upsilon^q$.
Let $g_j(z)=z_1^j$, where $z=(z_1,z_2,\cdots,z_n)$ and $j\in\N\cup\{0\}$.
By  Lemma 1.11 in \cite{Zk2005} and Lemma \ref{0313-1},  we have
\begin{align*}
P_\om^*(g_j)(\xi)
&=\frac{\om(\xi)}{\upsilon(\xi)}\int_\BB {g_j(z)B_z^\om(\xi)}\upsilon(z)dV(z)   \\
&=\frac{1}{2n!}\frac{\om(\xi)}{\upsilon(\xi)}\sum_{k=0}^\infty \frac{(n-1+k)!}{k!\om_{2n+2k-1}} \int_\BB g_j(z) \langle \xi,z\rangle^k\upsilon(z)dV(z)   \\
&=\frac{2n}{2n!}\frac{\om(\xi)}{\upsilon(\xi)}\sum_{k=0}^\infty \frac{(n-1+k)!}{k!\om_{2n+2k-1}}
\int_0^1 r^{2n+k+j-1} \upsilon(r)dr\int_\SS \eta_1^j \langle \xi,\eta\rangle^kd\sigma(\eta)  \\
&=\xi_1^j\frac{\om(\xi)}{\upsilon(\xi)} \frac{\upsilon_{2n+2j-1}}{\om_{2n+2j-1}}\frac{(n-1+j)!}{j!(2n-1)!}\frac{(n-1)!j!}{(n-1+j)!}\\
&=\xi_1^j\frac{\om(\xi)}{\upsilon(\xi)} \frac{\upsilon_{2n+2j-1}}{\om_{2n+2j-1}}\frac{(n-1)!}{(2n-1)!}.
\end{align*}
By Lemmas \ref{0507-1} and \ref{1210-3}, we obtain
\begin{align*}
\|g_j\|_{L_\upsilon^q}^q
&=\int_{\BB}|z_1|^{qj}\upsilon(z)dV(z)
=2n\int_0^1 r^{2n+qj-1}\upsilon(r)dr\int_{\SS}|\eta_1|^{qj}d\sigma(\eta)  \\
&=2n {\upsilon}_{2n+qj-1} \int_{\SS}|\eta_1|^{qj}d\sigma(\eta)\\
&\approx {\upsilon}_{2n+2j-1} \int_{\SS}|\eta_1|^{qj}d\sigma(\eta),
\end{align*}
and
\begin{align*}
\|P_\om^*(g_j)(\xi)\|_{L_\upsilon^q}^q
&=\left(\frac{(n-1)!}{(2n-1)!}\frac{\upsilon_{2n+2j-1}}{\om_{2n+2j-1}}\right)^q\int_{\BB} |\xi_1|^{jq}\frac{\om^q(\xi)}{\upsilon^{q-1}(\xi)} dV(\xi)  \\
&\approx \left(\frac{\upsilon_{2n+2j-1}}{\om_{2n+2j-1}}\right)^q\int_0^1 r^{2n+qj-1}\frac{\om^q(r)}{\upsilon^{q-1}(r)}dr\int_\SS |\eta_1|^{jq} d\sigma(\eta)  \\
&\gtrsim \|g_j\|_{L_\upsilon^q}^q\frac{\upsilon_{2n+2j-1}^{q-1}}{\om_{2n+2j-1}^q}\int_{1-\frac{1}{2j+1}}^1 \frac{\om^q(r)}{\upsilon^{q-1}(r)}r^{2n-1}dr \\
&\approx   \|g_j\|_{L_\upsilon^q}^q\frac{\upsilon_{2j+1}^{q-1}}{\om_{2j+1}^q}\int_{1-\frac{1}{2j+1}}^1 \frac{\om^q(r)}{\upsilon^{q-1}(r)}r^{2n-1}dr.
\end{align*}
Let $r_j=1-\frac{1}{2j+1}$. We get
\begin{align*}
\|P_\om^*(g_j)(\xi)\|_{L_\upsilon^q}^q
\gtrsim \|g_j\|_{L_\upsilon^q}^q\frac{\hat{\upsilon}(r_j)^{q-1}}{\hat{\om}(r_j)^q}\int_{r_j}^1 \frac{\om^q(r)}{\upsilon^{q-1}(r)}r^{2n-1}dr .
\end{align*}
Let
$$H(t)=\frac{\hat{\upsilon}(t)^{q-1}}{\hat{\om}(t)^q}\int_{t}^1 \frac{\om^q(r)}{\upsilon^{q-1}(r)}r^{2n-1}dr.$$
When $r_j\leq t< r_{j+1}$, we have
$H(t)\lesssim H(r_j)$.
Thus, by the assumption, we have get $\sup_{t\geq 0}H(t)<\infty$   as desired.

$(iii)\Rightarrow(i)$.  Suppose that $(iii)$ holds. For all $z\in\BB$, let
\begin{align*}
h(z)=\upsilon(z)^\frac{1}{p}\left(\int_{|z|}^1 \frac{\om(s)^{q}}{\upsilon(s)^{q-1}}s^{2n-1}ds\right)^\frac{1}{pq}.
\end{align*}
By the hypothesis, we have
\begin{align}\label{0515-2}
\int_t^1 \left(\frac{\om(s)}{h(s)}\right)^q s^{2n-1}ds
= q\left(\int_t^1 \frac{\om(s)^q}{\upsilon(s)^{q-1}} s^{2n-1}ds\right)^\frac{1}{q}
\lesssim M\frac{\hat{\om}(t)}{\hat{\upsilon}(t)^\frac{1}{p}}.
\end{align}
If $r|z|\leq \frac{1}{4}$, by Lemma \ref{0313-1}, we have
$$M_1(r,B_z^\om)\leq \|B_{rz}^\om\|_{H^\infty}\approx \frac{1}{\hat{\om}(S_{rz})}\approx 1.$$
If $r|z|>\frac{1}{4}$, by Lemma \ref{0507-1}, there exists a constant $a>0$  such that $\frac{\hat{\om}(t)}{(1-t)^a}$ is essentially decreasing. Then by Lemma \ref{0313-2}, we have
$$M_1(r,B_z^\om)\lesssim \int_0^{r|z|}\frac{dt}{\hat{\om}(t)(1-t)}\lesssim \frac{(1-r|z|)^a}{\hat{\om}(r|z|)}\int_0^{r|z|}\frac{dt}{(1-t)^{a+1}}\approx \frac{1}{\hat{\om}(r|z|)}.$$
So, for all $r\in(0,1)$ and $z\in\BB$,  we have
\begin{align}\label{0515-4}
M_1(r,B_z^\om)\lesssim 1+ \int_0^{r|z|}\frac{1}{\hat{\om}(t)(1-t)}dt\lesssim  \frac{1}{\hat{\om}(r|z|)}.
\end{align}
Hence, by (\ref{0515-2}), (\ref{0515-4}), Fubini's theorem and Lemma \ref{0507-1}, we obtain
\begin{align*}
\int_\BB |B_z^\om(\xi)|\left(\frac{\om(\xi)}{h(\xi)}\right)^qdV(\xi)
=&2n \int_0^1 \left(\frac{\om(r)}{h(r)}\right)^q r^{2n-1} M_1(r, B_z^\om)dr \\
\lesssim & \int_0^1 \left(\frac{\om(r)}{h(r)}\right)^q r^{2n-1}\left(1+ \int_0^{r|z|}\frac{1}{\hat{\om}(t)(1-t)}dt\right) dr   \\
\lesssim & M\frac{\hat{\om}(0)}{\hat{\upsilon}(0)^\frac{1}{p}}+\int_0^{|z|}\frac{1}{\hat{\om}(t)(1-t)}\int_{\frac{t}{|z|}}^1 \left(\frac{\om(r)}{h(r)}\right)^q r^{2n-1} drdt  \\
\leq & M\frac{\hat{\om}(0)}{\hat{\upsilon}(0)^\frac{1}{p}}+\int_0^{|z|}\frac{1}{\hat{\om}(t)(1-t)}\int_{t}^1 \left(\frac{\om(r)}{h(r)}\right)^q r^{2n-1} drdt  \\
\lesssim & M+M \int_0^{|z|}\frac{1}{\hat{\upsilon}(t)^\frac{1}{p}(1-t)}dt
\lesssim \frac{M}{\hat{\upsilon}(|z|)^\frac{1}{p}}.
\end{align*}
Therefore, by   H\"older's inequality  and Fubini's theorem  we get
\begin{align}
\|P_\om^+(f)\|_{L_\upsilon^p}^p
&=\int_\BB \upsilon(z)\left| \int_\BB f(\xi) |B_z^\om(\xi)|\om(\xi)dV(\xi) \right|^pdV(z)  \nonumber\\
&\leq \int_\BB\left(\int_\BB |f(\xi)|^p h(\xi)^p |B_z^\om(\xi)|dV(\xi)\right)
\left(\int_\BB |B_z^\om(\xi)|\left(\frac{\om(\xi)}{h(\xi)}\right)^qdV(\xi)\right)^\frac{p}{q}\upsilon(z)dV(z)  \nonumber \\
&\lesssim M^\frac{p}{q}\int_\BB\left(\int_\BB |f(\xi)|^p h(\xi)^p |B_z^\om(\xi)|dV(\xi)\right)
\frac{\upsilon(z)}{\hat{\upsilon}(z)^\frac{1}{q}}dV(z)   \nonumber\\
&=M^\frac{p}{q}\int_\BB|f(\xi)|^p h(\xi)^p\left(\int_\BB  |B_z^\om(\xi)|\frac{\upsilon(z)}{\hat{\upsilon}(z)^\frac{1}{q}}dV(z) \right) dV(\xi).\label{0618-1}
\end{align}
Since $|B_z^\om(\xi)|=|B_\xi^\om(z)|$, by (\ref{0515-4}),  we have
\begin{align}
\int_{\BB\backslash |\xi|\BB} |B_z^\om(\xi)|\frac{\upsilon(z)}{\hat{\upsilon}(z)^\frac{1}{q}}dV(z)
&\lesssim \int_{|\xi|}^1 \frac{\upsilon(r)}{\hat{\upsilon}(r)^\frac{1}{q}}M_1(r,B_\xi^\om)dr
\lesssim \frac{\hat{\upsilon}(\xi)^\frac{1}{p}}{\hat{\om}(\xi)}, \label{0322-1}
\end{align}
and
\begin{align}
\int_{ |\xi|\BB} |B_z^\om(\xi)|\frac{\upsilon(z)}{\hat{\upsilon}(z)^\frac{1}{q}}dV(z)
\lesssim \int_0^{|\xi|}\frac{\upsilon(r)}{\hat{\upsilon}(r)^\frac{1}{q}}\frac{1}{\hat{\om}(r|\xi|)}r^{2n-1}dr
\lesssim \int_0^{|\xi|} \frac{\upsilon(r)}{\hat{\upsilon}(r)^\frac{1}{q}\hat{\om}(r)}r^{2n-1}dr. \label{0322-2}
\end{align}
By the assumption, we have
\begin{align}\label{0518-1}
\int_0^1\frac{\om(s)^q}{\upsilon(s)^{q-1}}s^{2n-1}ds<\infty,\,\,
\int_0^{{\frac{1}{2}}}  \frac{\upsilon(t)}{\hat{\om}(t)^p} t^{2n-1}dt>0,\,\,
\int_{\frac{1}{2}}^1  \frac{\upsilon(t)}{\hat{\om}(t)^p} t^{2n-1}dt>0.
\end{align}

Then when $r\leq {\frac{1}{2}}$,  we have
$$ \hat{\om}(r)\approx 1\approx
\hat{\upsilon}(r)^\frac{1}{p}\left(\int_r^{1}  \frac{\om(t)^q}{\upsilon(t)^{q-1}} t^{2n-1}dt\right)^\frac{1}{q}. $$
When $r> {\frac{1}{2}}$, by H\"older's inequality, we have
$$\hat{\om}(r)=\int_r^1\om(t)dt
\leq\hat{\upsilon}(r)^\frac{1}{p}\left(\int_r^1 \frac{\om(t)^q}{\upsilon(t)^{q-1}}dt\right)^\frac{1}{q}
\approx \hat{\upsilon}(r)^\frac{1}{p}\left(\int_r^1 \frac{\om(t)^q}{\upsilon(t)^{q-1}}t^{2n-1}dt\right)^\frac{1}{q}.$$
Then, for all $r\in(0,1)$, we have
$$
\frac{\hat{\om}(r)^p}{\hat{\upsilon}(r)}\int_0^r \frac{\upsilon(t)}{\hat{\om}(t)^p}t^{2n-1}dt
\lesssim
\left(\int_r^1 \frac{\om(t)^q}{\upsilon(t)^{q-1}}t^{2n-1}dt\right)^\frac{p}{q}\int_0^r \frac{\upsilon(t)}{\hat{\om}(t)^p} t^{2n-1}dt.
$$
We claim
\begin{align}
K_*:=\sup_{0\leq r<1} \left(\int_r^1 \frac{\om(t)^q}{\upsilon(t)^{q-1}}t^{2n-1}dt\right)^\frac{1}{q}
\left(\int_0^r \frac{\upsilon(t)}{\hat{\om}(t)^p}t^{2n-1}dt\right)^\frac{1}{p}<\infty.    \label{0323-1}
\end{align}
Taking this for granted for a moment. Then,
\begin{align}
 \int_0^{|\xi|} \frac{\upsilon(r)}{\hat{\upsilon}(r)^\frac{1}{q}\hat{\om}(r)}r^{2n-1}dr
 &\leq \int_0^{|\xi|} \frac{\upsilon(r)}{\hat{\om}(r)} \left(\frac{K_*^p}{\hat{\om}(r)^p\int_0^r \frac{\upsilon(t)}{\hat{\om}(t)^p}t^{2n-1}dt}\right)^\frac{1}{q}r^{2n-1}dr   \nonumber\\
 &=K_*^{p-1}\int_0^{|\xi|} \frac{\upsilon(r)}{\hat{\om}(r)^p} \left(\int_0^r \frac{\upsilon(t)}{\hat{\om}(t)^p}t^{2n-1}dt\right)^{-\frac{1}{q}}r^{2n-1}dr \nonumber\\
 &\approx K_*^{p-1} \left(\int_0^{|\xi|} \frac{\upsilon(t)}{\hat{\om}(t)^p}t^{2n-1}dt\right)^{\frac{1}{p}}.\label{0323-2}
\end{align}
By (\ref{0322-1}), (\ref{0322-2}),  (\ref{0323-1}) and (\ref{0323-2}), we have
\begin{align}\label{0618-2}
h(\xi)^p\int_{\BB\backslash |\xi|\BB} |B_z^\om(\xi)|\frac{\upsilon(z)}{\hat{\upsilon}(z)^\frac{1}{q}}dV(z)
\lesssim  \upsilon(\xi)
 \left(\int_{|\xi|}^1 \frac{\om(t)^q}{\upsilon(t)^{q-1}}t^{2n-1}dt\right)^\frac{1}{q}
 \frac{\hat{\upsilon}(\xi)^\frac{1}{p}}{\hat{\om}(\xi)}
 \leq M \upsilon(\xi),
\end{align}
and
\begin{align}
h(\xi)^p\int_{ |\xi|\BB} |B_z^\om(\xi)|\frac{\upsilon(z)}{\hat{\upsilon}(z)^\frac{1}{q}}dV(z)
&\lesssim K_*^{p-1} \upsilon(\xi)
 \left(\int_{|\xi|}^1 \frac{\om(t)^q}{\upsilon(t)^{q-1}}t^{2n-1}dt\right)^\frac{1}{q}
 \left(\int_0^{|\xi|} \frac{\upsilon(t)}{\hat{\om}(t)^p}t^{2n-1}dt\right)^{\frac{1}{p}}  \nonumber\\
& \leq K_*^p\upsilon(\xi). \label{0618-3}
\end{align}
So, by (\ref{0618-1}), (\ref{0618-2}) an (\ref{0618-3}), we obtain
\begin{align*}
\|P_\om^+(f)\|_{L_\upsilon^p}^p
&\lesssim  \int_\BB|f(\xi)|^p \upsilon(\xi) dV(\xi)=\|f\|_{L_\upsilon^p}^p.
\end{align*}

Now, we  prove that (\ref{0323-1}) holds. Assume $r>\frac{1}{2}$.  An integration by parts and H\"older's inequality give
\begin{align*}
\int_0^r \frac{\upsilon(t)}{\hat{\om}(t)^p}t^{2n-1}dt
\leq & \int_0^\frac{1}{2} \frac{\upsilon(t)}{\hat{\om}(t)^p}dt +\int_\frac{1}{2}^r \frac{\upsilon(t)}{\hat{\om}(t)^p}dt \\
\lesssim&  1+\int_\frac{1}{2}^r \frac{\hat{\upsilon}(t)}{\hat{\om}(t)^p} \frac{\om(t)}{\upsilon(t)^\frac{1}{p}} \frac{\upsilon(t)^\frac{1}{p}}{\hat{\om}(t)}t^{2n-1} dt    \\
\leq& 1+
\left(\int_0^r \left(\frac{\hat{\upsilon}(t)}{\hat{\om}(t)^p} \frac{\om(t)}{\upsilon(t)^\frac{1}{p}} \right)^q t^{2n-1}dt\right)^\frac{1}{q}
\left(\int_0^r  \frac{\upsilon(t)}{\hat{\om}(t)^p} t^{2n-1}dt\right)^\frac{1}{p}                               \\
=& 1+  J_1^\frac{1}{q}
\left(\int_0^r  \frac{\upsilon(t)}{\hat{\om}(t)^p} t^{2n-1}dt\right)^\frac{1}{p},
\end{align*}
where
$$J_1=\int_0^r \left(\frac{\hat{\upsilon}(t)}{\hat{\om}(t)^p} \frac{\om(t)}{\upsilon(t)^\frac{1}{p}} \right)^q t^{2n-1}dt.$$
Since
\begin{align*}
J_1&=\int_0^r \left(\frac{\hat{\upsilon}(t)^\frac{1}{p}}{\hat{\om}(t)}\left(\int_t^1 \frac{\om(s)^q}{\upsilon(s)^{q-1}}s^{2n-1}ds\right)^{\frac{1}{q}}\right)^{pq}
\frac{ \frac{\om(t)^q}{\upsilon(t)^{q-1}} }{\left(\int_t^1 \frac{\om(s)^q}{\upsilon(s)^{q-1}}s^{2n-1}ds\right)^p}t^{2n-1} dt \\
&\lesssim  \frac{M^{pq} }{\left(\int_r^1 \frac{\om(s)^q}{\upsilon(s)^{q-1}}s^{2n-1}ds\right)^{p-1}},
\end{align*}
we obtain
\begin{align*}
\int_0^r \frac{\upsilon(t)}{\hat{\om}(t)^p}t^{2n-1}dt
\lesssim 1+ M^p
{\left(\int_0^r  \frac{\upsilon(t)}{\hat{\om}(t)^p} t^{2n-1}dt\right)^\frac{1}{p} }
{\left(\int_r^1 \frac{\om(s)^q}{\upsilon(s)^{q-1}}s^{2n-1}ds\right)^{-\frac{p}{q^2}}}.
\end{align*}
Then
\begin{align*}
\left(\int_0^r \frac{\upsilon(t)}{\hat{\om}(t)^p}t^{2n-1}dt\right)^\frac{1}{p}
\lesssim 1+ M
{\left(\int_0^r  \frac{\upsilon(t)}{\hat{\om}(t)^p} t^{2n-1}dt\right)^\frac{1}{p^2} }
{\left(\int_r^1 \frac{\om(s)^q}{\upsilon(s)^{q-1}}s^{2n-1}ds\right)^{-\frac{1}{q^2}}}.
\end{align*}
Multiply   the expression by  $\left(\int_r^1 \frac{\om(s)^q}{\upsilon(s)^{q-1}}s^{2n-1}ds\right)^{\frac{1}{q}}$, we have
\begin{align*}
J_2(r)
\lesssim & \left(\int_r^1 \frac{\om(s)^q}{\upsilon(s)^{q-1}}s^{2n-1}ds\right)^{\frac{1}{q}} +MJ_2(r)^\frac{1}{p},
\end{align*}
where,
$$J_2(r)=\left(\int_r^1 \frac{\om(s)^q}{\upsilon(s)^{q-1}}s^{2n-1}ds\right)^{\frac{1}{q}}\left(\int_0^r \frac{\upsilon(t)}{\hat{\om}(t)^p}t^{2n-1}dt\right)^\frac{1}{p}.$$
Using (\ref{0518-1}), we have
\begin{align*}
J_2(r)^\frac{1}{q}
\lesssim {\left(\int_r^1 \frac{\om(s)^q}{\upsilon(s)^{q-1}}s^{2n-1}ds\right)^\frac{1}{q^2}}{\left(\int_0^r  \frac{\upsilon(t)}{\hat{\om}(t)^p} t^{2n-1}dt\right)^{-\frac{1}{p^2}}}
+M<\infty.
\end{align*}
Therefore,
$$\sup_{r>\frac{1}{2}} \left(\int_r^1 \frac{\om(s)^q}{\upsilon(s)^{q-1}}s^{2n-1}ds\right)^{\frac{1}{q}}\left(\int_0^r \frac{\upsilon(t)}{\hat{\om}(t)^p}t^{2n-1}dt\right)^\frac{1}{p}   <\infty.$$
When $r\leq \frac{1}{2}$,  (\ref{0323-1}) holds obviously.

$(iii)\Rightarrow(iv)$. Using (\ref{0518-1}) and (\ref{0323-1}), we can get the desired result.

$(iv)\Rightarrow(iii)$.  Suppose that $(iv)$ holds, that is,
$$N:=\sup\limits_{0\leq r <1} \left(\int_0^r \frac{\upsilon(s)}{\hat{\om}(s)^p}s^{2n-1}ds+1\right)^\frac{1}{p}\left(\int_r^1 \frac{\om(s)^q}{\upsilon(s)^{q-1}}s^{2n-1}ds\right)^\frac{1}{q}<\infty.$$

Since $\om\in\dD$, by Lemma \ref{0507-1}, there exists $b>0$ such that $\frac{\hat{\om}(r)^p}{(1-r)^b}$ is essentially increasing.
Then
$$\int_0^r \frac{\upsilon(s)}{\hat{\om}(s)^p}s^{2n-1}ds\gtrsim \frac{(1-r)^b}{\hat{\om}(r)^p}\int_0^r \frac{\upsilon(s)}{(1-s)^b}s^{2n-1}ds.$$
Since $\upsilon\in\dD$, there exist $C>1$ and $K>1$ such that
$$\hat{\upsilon}(r)\geq C\hat{\upsilon}(1-\frac{1-r}{K}).$$
Let $r_k=1-K^{-k}$, $k=0,1,2,\cdots.$
For any $r_2\leq r<1$, there is an integer  $x=x(r)$ such that $r_x\leq r<r_{x+1}$. Then
\begin{align*}
(1-r)^b\int_0^r \frac{\upsilon(s)}{(1-s)^b}s^{2n-1}ds
\geq& \sum_{k=0}^{x-1}\int_{r_{k}}^{r_{k+1}} \left(\frac{1-r}{1-s}\right)^b\upsilon(s)s^{2n-1}ds\\
\geq&  \sum_{k=0}^{x-1}r_k^{2n-1}\left(\frac{1-r_{x+1}}{1-r_k}\right)^b\left(\hat{\upsilon}(r_k)-\hat{\upsilon}(r_{k+1})\right)\\
\geq&\sum_{k=0}^{x-1}r_k^{2n-1}\frac{C-1}{CK^{(x+1-k)b}}\hat{\upsilon}(r_k)\\
\geq&\sum_{k=0}^{x-1}r_k^{2n-1}\frac{(C-1)C^{x-1-k}}{K^{(x+1-k)b}}\hat{\upsilon}(r_x)\\
\geq& \hat{\upsilon}(r)\frac{C-1}{C^2}\sum_{s=2}^{x+1} r_{x+1-s}^{2n-1}\left(\frac{C}{K^b}\right)^s \\
\geq& r_{x-1}^{2n-1}\hat{\upsilon}(r)\frac{C-1}{K^{2b}}
\geq r_1^{2n-1}\hat{\upsilon}(r)\frac{C-1}{K^{2b}}.
\end{align*}
So, when $r\geq r_2$, we have
$$\int_0^r \frac{\upsilon(s)}{\hat{\om}(s)^p}s^{2n-1}ds\gtrsim \frac{\hat{\upsilon}(r)}{\hat{\om}(r)^p}.$$
Therefore,
$$\sup\limits_{r_2\leq r<1} \frac{\hat{\upsilon}(r)^{\frac{1}{p}}}{\hat{\om}(r)}
  \left(\int_r^1 \frac{\om(s)^q}{\upsilon(s)^{q-1}}s^{2n-1}ds\right)^\frac{1}{q}<\infty.$$
 When $r< r_2$,   $(iii)$ holds obviously.  The proof is complete.
\end{proof}



\noindent {\bf Acknowledgments.} 
  The corresponding author was supported by  NNSF of China (No.11720101003) and the Macao Science and Technology Development Fund (No. 186/2017/A3).

\end{document}